\documentclass[a4paper]{article}
\usepackage[utf8]{inputenc}
\usepackage{amsmath,amssymb,amsthm,bbold,colortbl,enumerate,float,mathrsfs,mathtools,mdframed,physics,todonotes,url,xcolor}
\usepackage[top=2.5cm, bottom=2.5cm, left=2.4cm, right=2.4cm]{geometry}
\usepackage{algorithm,algpseudocode}
\usepackage[hidelinks]{hyperref}

\usepackage[noabbrev]{cleveref}
\newtheorem{theorem}{Theorem}
\newtheorem{lemma}[theorem]{Lemma}

\newtheorem{corollary}[theorem]{Corollary}

\theoremstyle{definition}

\newtheorem{example}[theorem]{Example}
\newtheorem{problem}[theorem]{Problem}
\newtheorem{definition}[theorem]{Definition}

\newcommand{\FF}{\mathbb F}
\newcommand{\CC}{\mathbb C}
\newcommand{\ZZ}{\mathbb Z}

\newcommand{\QLS}[1]{\(\mathrm{QLS}(#1)\)}
\newcommand{\MOLS}[1]{\(\mathrm{MOLS}(#1)\)}
\newcommand{\MOQLS}[1]{\(\mathrm{MOQLS}(#1)\)}

\title{Large sets of mutually orthogonal quantum Latin squares}
\author{Simeon Ball \\ \textit{Universitat Politècnica de Catalunya} \and Robin Simoens \\ \textit{Ghent University} \\ \textit{Universitat Politècnica de Catalunya}}
\date{}

\begin{document}

\maketitle

\begin{abstract}
    How large can a set of mutually orthogonal quantum Latin squares (MOQLS) get? We show that a set of \(n-2\) MOQLS of order \(n\) is necessarily classical and construct large non-classical sets of MOQLS of orders that are prime powers, improving both the previously known lower and upper bounds.
\end{abstract}

\paragraph{Keywords.} Quantum Latin square; Mutually orthogonal quantum Latin squares; Unitary pattern; Frobenius ring; Directions determined by a function.

\paragraph{MSC.}
05B15, 
81P70. 

\section{Introduction}

Mutually orthogonal quantum Latin squares\footnote{See Section~\ref{sec:prelim} for their definition.} (MOQLS) were introduced by Musto \cite{Mustothesis} as a generalisation of mutually orthogonal Latin squares (MOLS). Quantum Latin squares, and more generally, MOQLS, have applications in quantum teleportation, dense coding, complex Hadamard matrices, quantum graph isomorphism, mutually unbiased bases, AME states and perfect tensors \cite{Qgraphisomorphism,nonclassical,motivation,2MOQLS7,Mustopaper,Mustothesis,GMOQLS}. 
A natural problem regarding these objects is their existence.

\begin{problem}
    Given \(n\geq2\), what is the largest value of \(t\) such that \(t\) \MOQLS{n} exist?
\end{problem}

Since we are mostly interested in the non-classical case, we define \(M(n)\) as the largest value of \(t\) such that a \emph{non-classical} set of \(t\) \MOQLS{n} exist, following the notation in \cite{GMOQLS}. If such a set does not exist, we let \(M(n)=0\).

The following results on the existence of non-classical sets of MOQLS are known.

\begin{theorem}[\cite{36paper}]
    A set of \(n-1\) \MOQLS{n} is classical. That is, \(M(n)\leq n-2\).
\end{theorem}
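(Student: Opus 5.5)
We have to show that every set of \(n-1\) \MOQLS{n} is classical. The plan is to adapt the classical counting argument, which shows that \(n-1\) MOLS form an affine plane, into a linear-algebra statement in \(\CC^n\otimes\CC^n\). We set up notation as follows. A quantum Latin square \(\Phi\) is an \(n\times n\) array of unit vectors \(\ket{\Phi_{ij}}\in\CC^n\) such that every row and every column is an orthonormal basis. Two squares \(\Phi,\Psi\) are orthogonal when the \(n^2\) vectors \(\ket{\Phi_{ij}}\otimes\ket{\Psi_{ij}}\) form an orthonormal basis of \(\CC^n\otimes\CC^n\). Equivalently, \(\sum_{i,j}\ket{\Phi_{ij}}\bra{\Phi_{ij}}\otimes\ket{\Psi_{ij}}\bra{\Psi_{ij}}=I\).

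\textbf{Step 1: the row and column squares.} We adjoin to \(\Phi^{(1)},\dots,\Phi^{(n-1)}\) the two "coordinate'' squares \(R_{ij}=\ket{i}\) and \(C_{ij}=\ket{j}\). Each \(\Phi^{(k)}\) is orthogonal to both, precisely because its rows and columns are bases. This gives \(n+1\) pairwise orthogonal objects. For each cell \((i,j)\) we consider the rank-one projections \(P^{(k)}_{ij}=\ket{\Phi^{(k)}_{ij}}\bra{\Phi^{(k)}_{ij}}\). Orthogonality of \(\Phi^{(k)}\) and \(\Phi^{(l)}\) means that, for fixed \(i,j,i',j'\), the overlaps satisfy \(\sum\) over cells \(|\langle\Phi^{(k)}_{ij}|\Phi^{(k)}_{i'j'}\rangle|^2|\langle\Phi^{(l)}_{ij}|\Phi^{(l)}_{i'j'}\rangle|^2=\) the appropriate delta. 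We record Gram-type identities: define for each pair of distinct cells \(c\neq c'\) the quantities \(x_k(c,c')=|\langle\Phi^{(k)}_c|\Phi^{(k)}_{c'}\rangle|^2\in[0,1]\).

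\textbf{Step 2: the counting/variance argument.} Classically, one fixes a cell \(c\) and counts the cells \(c'\neq c\) that agree with \(c\) in some square. Each of the \(n+1\) squares contributes \(n-1\) agreements, the total is \((n+1)(n-1)=n^2-1\), and orthogonality forbids two agreements at the same \(c'\). Hence every \(c'\) agrees with \(c\) in exactly one square. The quantum analogue is as follows. Row and column bases being orthonormal gives, for fixed \(c\) and fixed \(k\), \(\sum_{c'\neq c}x_k(c,c')=n-1\). This holds because the \(n\) vectors in each of the other rows form a basis, so each contributes total overlap \(1\), and the \(n-1\) other rows contribute \(n-1\). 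Summing over the \(n+1\) squares (with \(x_R,x_C\in\{0,1\}\)) gives \(\sum_{c'\neq c}\sum_k x_k(c,c')=n^2-1\). Orthogonality of \(\Phi^{(k)},\Phi^{(l)}\), applied to the vector \(\ket{\Phi^{(k)}_c}\otimes\ket{\Phi^{(l)}_c}\) expanded in their joint basis, yields \(\sum_{c'}x_k(c,c')x_l(c,c')=1\). Hence \(\sum_{c'\neq c}x_kx_l=0\), so \(x_k(c,c')x_l(c,c')=0\) for \(k\neq l\). Therefore for each \(c'\neq c\) at most one \(x_k(c,c')\) is nonzero, and the counting gives \(\sum_{c'}\max_k x_k(c,c')=n^2-1\) with each term at most \(1\). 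So every term equals \(1\), which means each overlap is \(0\) or \(1\).

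\textbf{Step 3: conclusion.} Overlaps in \(\{0,1\}\) mean that any two entries of a square are either orthogonal or equal up to phase. Hence each \(\Phi^{(k)}\) uses only \(n\) mutually orthogonal lines, namely those of its first row, and after a common unitary change of basis (and phases) it is a classical Latin square. The main obstacle is Step 2's identity \(\sum_{c'}x_kx_l=1\). Including the term \(c'=c\), which equals \(1\), is what forces the remaining sum to vanish, and this has to be derived carefully from completeness of the product basis. One must also check that the row and column squares fit into the same framework, so that the total sum is \(n^2-1\). Once that is done, the rest is the classical pigeonhole argument.
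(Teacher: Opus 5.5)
Your proof is correct, and it takes a genuinely different route from the paper. The paper quotes this result from \cite{36paper} without proof; it also follows from Theorem~\ref{thm:n-2} for \(n\geq3\), since classicality of a set is checked square by square. The underlying mechanism appears in the first lines of the proof of Theorem~\ref{thm:n-2}. After putting every square in standard form, consider an off-first-row cell \((i,j)\). Rule \#2 removes coordinate \(j\), and rule \#3 makes the supports of the \(n-1\) entries pairwise disjoint. These are then \(n-1\) nonempty disjoint subsets of an \((n-1)\)-set, hence singletons, so every entry is a basis vector up to phase.

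In your notation, this is exactly your pigeonhole restricted to the \(n\) cells \(c'\) of the first row, with \(c\) off the first row. The column class takes \((1,j)\), the row class contributes nothing, and each square has total overlap \(1\) with the first row. You instead count over all \(n^2-1\) cells with the row and column classes adjoined. This needs no normalisation to standard form. It also yields the stronger, affine-plane-like conclusion that any two distinct cells are equal up to phase in exactly one of the \(n+1\) classes and orthogonal in all the others.

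What the paper's local support count buys is that it still says something with fewer squares. With \(n-2\) squares it bounds weights by \(2\), which is the entry point to Lemma~\ref{lem:weight2} and the completion step. Your global count would then have total \(n(n-1)\) spread over \(n^2-1\) cells, and it forces nothing.

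Two minor points:
\begin{enumerate}
\item The identity you single out as the main obstacle is immediate. The vectors \(\ket{\Phi^{(k)}_c}\otimes\ket{\Phi^{(l)}_c}\) and \(\ket{\Phi^{(k)}_{c'}}\otimes\ket{\Phi^{(l)}_{c'}}\) are distinct members of an orthonormal basis, and that is literally \(x_k(c,c')x_l(c,c')=0\) for \(c'\neq c\).
\item Your count \(\sum_{c'\neq c}x_k(c,c')=n-1\) also uses that the other cells in the row of \(c\) contribute \(0\), by orthonormality of that row.
\end{enumerate}
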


\begin{theorem}[\cite{GMOQLS}]
    If there exist \(t_1\) \MOQLS{n_1} and \(t_2\) \MOQLS{n_2} and if \(t=\min\{t_1,t_2\}\geq2\), then there exists a non-classical set of \(t\) \MOQLS{n_1n_2}.
\end{theorem}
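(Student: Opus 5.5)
We will take \(t\) \MOQLS{n_1}, say \(\Phi^{(1)},\dots,\Phi^{(t)}\), and \(t\) \MOQLS{n_2}, say \(\Psi^{(1)},\dots,\Psi^{(t)}\), each set truncated to \(t=\min\{t_1,t_2\}\) squares. The plan has three steps. First form the tensor products \(\Phi^{(k)}\otimes\Psi^{(k)}\), which are orthogonal quantum Latin squares of order \(n_1n_2\). Then modify them, using a unitary twist on one factor, so that the set can no longer be classical. Finally check that orthogonality and the quantum Latin square property survive the modification.

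\textbf{Step 1: tensor products.} Index the cells of the product square by pairs \(((i_1,i_2),(j_1,j_2))\) and set the entry to \(\Phi^{(k)}_{i_1j_1}\otimes\Psi^{(k)}_{i_2j_2}\in\CC^{n_1}\otimes\CC^{n_2}\). Each row and each column forms an orthonormal basis, since tensor products of orthonormal bases are orthonormal bases. For \(k\neq l\), orthogonality reduces to the same condition in each factor, because the relevant vectors and inner products factor as tensor products. This is a routine check from the definition in Section~\ref{sec:prelim}.

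\textbf{Step 2: the twist.} For fixed \(i_1\), replace \(\Psi^{(k)}_{i_2j_2}\) by \(U_{i_1}\Psi^{(k)}_{i_2j_2}\), where \(U_{i_1}\) is a unitary on \(\CC^{n_2}\) that depends on the row index \(i_1\) of the first factor and is the same for all \(k\). Within a row the unitary is constant, so rows remain orthonormal bases. For a column, orthonormality can be checked by splitting over \(i_1\): vectors with different \(i_1\) are already orthogonal through the \(\Phi\) factor whenever \(\Phi^{(k)}_{i_1j_1}\perp\Phi^{(k)}_{i_1'j_1}\), which holds in a column. Orthogonality between squares \(k\) and \(l\) should be handled the same way. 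The inner products split into a \(\Phi\)-part and a \(\Psi\)-part. The \(\Phi\)-part vanishes unless \(i_1=i_1'\), and when \(i_1=i_1'\) the common unitary cancels in the \(\Psi\)-part.

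\textbf{Step 3: non-classicality.} Since \(t\geq2\), we can choose the family \(\{U_{i_1}\}\) generically, for instance \(U_1=I\) and \(U_2\) a unitary mapping some \(\Psi\) entry to a vector not proportional to any standard basis vector, nor to any product of them. We then show that the resulting set contains two entries that are neither equal up to phase nor orthogonal. A classical set, after a global unitary, has all entries drawn from a single orthonormal basis, so any two entries are either parallel or orthogonal; finding such a pair therefore rules out classicality. This is where \(t\geq2\) is needed, because a single quantum Latin square can be non-classical on its own, but classicality for a set requires comparing across squares.

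\textbf{Main obstacle.} The hardest part is Step 3: pinning down the precise notion of classical (equivalence to MOLS up to a global unitary and phases) and certifying that no equivalence can remove the twist. If the direct argument fails, I would instead count the distinct entries of the set up to phase. A classical set has exactly \(n_1n_2\) of them, and the twist should produce more than \(n_1n_2\).
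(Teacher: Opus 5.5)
The paper only quotes this theorem from \cite{GMOQLS} and gives no proof, so your argument has to stand on its own. Steps 1 and 2 are correct: the twisted squares with entries \(\Phi^{(k)}_{i_1j_1}\otimes U_{i_1}\Psi^{(k)}_{i_2j_2}\) are \(t\) \MOQLS{n_1n_2}. Strictly, the \(\Phi\)-part of the cross-square inner product vanishes unless \((i_1,j_1)=(i_1',j_1')\), and only in that case does \(U_{i_1}=U_{i_1'}\) cancel.

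The gap is Step 3, which is the entire content of the theorem. You never exhibit the witness, and the criterion you plan to use is false. Set isotopy, as defined in Section~\ref{sec:prelim}, applies a unitary \emph{separately to each square}, so a classical set need not have all its entries in one orthonormal basis. For example, applying the Fourier transform to one square of a classical set keeps the set classical, yet entries from different squares are then neither parallel nor orthogonal. For the same reason your fallback count is wrong: a classical set of \(t\) squares can have up to \(tn_1n_2\) distinct entries up to phase, and only each single classical square has exactly \(n_1n_2\). Your explanation of where \(t\geq2\) enters is also mistaken, since classicality of a set never involves comparing entries of different squares.

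The witness must lie inside one square, and your construction supplies it.
\begin{itemize}
\item Let \(v=(\Psi^{(1)}_{11}+\Psi^{(1)}_{12})/\sqrt2\) and take \(U_1=I\) and \(U_2\) any unitary with \(U_2\Psi^{(1)}_{11}=v\). Your condition ``not proportional to a standard basis vector'' is the wrong one unless \(\Psi^{(1)}\) is classical and in standard form. Then \(v\) is not parallel to any entry of row \(1\) of \(\Psi^{(1)}\), but \(\langle\Psi^{(1)}_{11},v\rangle\neq0\).
\item Since row \(1\) of \(\Phi^{(1)}\) is an orthonormal basis, some \(j_1\) has \(\langle\Phi^{(1)}_{1j_1},\Phi^{(1)}_{21}\rangle\neq0\).
\item In the first twisted square, the entries at cells \(((1,1),(j_1,1))\) and \(((2,1),(1,1))\) are \(\Phi^{(1)}_{1j_1}\otimes\Psi^{(1)}_{11}\) and \(\Phi^{(1)}_{21}\otimes v\). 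Their inner product is nonzero, and they are not parallel because their second tensor factors are not parallel.
\item Phases, row and column permutations, and a single unitary on the whole square preserve parallelism and orthogonality of pairs of entries of that square. In a square whose entries all lie in one orthonormal basis, any two entries are parallel or orthogonal.
\end{itemize}
Hence the first square, and therefore the set, is non-classical. This uses only \(n_1,n_2\geq2\), which is implicit in the statement, and it works verbatim for \(t=1\).
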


\begin{theorem}[\cite{36paper,nonclassical,2MOQLS7,GMOQLS}]
    \begin{enumerate}
        \item There exists a non-classical \QLS{n} if and only if \(n\geq4\).
        \item There exists a non-classical set of \(2\) \MOQLS{n} if and only if \(n\geq7\).
        \item If \(n\geq16\), then there exists a non-classical set of \(3\) \MOQLS{n}.
    \end{enumerate}
\end{theorem}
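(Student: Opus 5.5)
Since the statement collects three separate existence/non-existence results, I would prove each item by splitting it into a \emph{necessity} part (small orders admit only classical objects) and a \emph{sufficiency} part (explicit constructions for all large orders). Wherever possible I would reduce to the two theorems quoted just above.

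For item 1, necessity means showing that every \QLS{n} with \(n\le 3\) is classical. For \(n=2\), the vector in a cell determines, up to phase, the vector completing its row. Chasing this around the square forces all entries to lie in a single orthonormal basis. For \(n=3\), I would argue in the same way, using that two orthonormal bases sharing two vectors up to phase must share the third. A short case analysis should then show that every entry belongs to one fixed basis up to phases, so the square is classical. For sufficiency, I would start from a classical Latin square of order \(n\ge4\) that contains two disjoint \(2\times2\) subsquares using the same pair of symbols \(\{a,b\}\). I would then replace \(|a\rangle,|b\rangle\) in one of these subsquares by a rotated basis \(\cos\theta|a\rangle+\sin\theta|b\rangle\), \(-\sin\theta|a\rangle+\cos\theta|b\rangle\) of the same span. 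Rows and columns remain orthonormal bases, while for generic \(\theta\) the square uses more than \(n\) distinct rays, so it is non-classical. This mirrors the standard order-\(4\) example, and it extends to every \(n\ge4\) by taking a direct-sum or cyclic extension of that \(4\times4\) block.

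For item 2, the case \(n=3\) follows from the first theorem quoted above (\(2=n-1\)). For \(n=2\), two \MOQLS{2} cannot exist, since orthogonality would give \(n^2=4\) orthonormal vectors of the form \(|\psi_{ij}\rangle\otimes|\phi_{ij}\rangle\) with a Latin-type constraint that already fails classically. The cases \(n=4,5,6\) are the real difficulty. Here I would analyse the unitary pattern: orthogonality forces the \(n^2\) pairs \((|\psi_{ij}\rangle,|\phi_{ij}\rangle)\) to form an orthonormal basis of \(\CC^n\otimes\CC^n\). I would then show that any non-classical block of entries must span a subspace of dimension \(k\) with \(2\le k\le n-2\), and that such a block has to be supported on a \(k\times k\) subsquare-like pattern in \emph{both} squares simultaneously. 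I expect that a counting argument on these patterns, together with the classical non-existence of two \MOLS{2} and two \MOLS{6}, rules out \(n\le6\). For sufficiency, the product theorem handles every \(n=n_1n_2\) with \(n_1,n_2\ge3\) and \(n_1,n_2\ne6\). For the remaining orders (primes and small exceptional cases), I would modify a classical pair of \MOLS{n} in which both squares contain a common \(k\times k\) subsquare structure. Rotating the basis inside that block as in item 1 should preserve orthogonality, and the explicit non-classical pair of order \(7\) would be the base case.

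For item 3, the product theorem with \(t=3\) gives every \(n=n_1n_2\) where both factors admit three \MOLS{}; this holds for \(n_i\ge4\), \(n_i\notin\{6,10\}\) suitably, and in particular covers all composites with good factorisations. The remaining orders \(n\ge16\) (primes and leftovers) would be handled by the same subsquare-rotation trick applied to three \MOLS{n} that share a common transversal-compatible subsquare. Such triples can be obtained from standard constructions of MOLS with holes. The main obstacle overall is the non-existence of non-classical pairs for \(n=4,5,6\), because there no quoted theorem applies directly, and the structural analysis of the unitary pattern has to be done carefully, possibly with computer assistance for \(n=6\).
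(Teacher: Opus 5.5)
The paper gives no proof of this statement: it is background quoted from \cite{36paper,nonclassical,2MOQLS7,GMOQLS}. So your proposal has to stand on its own, and item~2 does not. Its necessity half for \(n=4,5,6\) is the real content, and your sketch does not establish it.

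\begin{itemize}
\item The non-existence of two MOLS of order \(2\) or \(6\) only rules out \emph{classical} pairs.
\item The claim that any non-classical part must sit on a \(k\times k\) subsquare pattern ``in both squares simultaneously'' is false. In Corollary~\ref{cor:construction} with \(q=7\), \(d=3\), the non-classical square \(\Psi^{(f)}\) is paired with \(\Phi^{(m)}\), where \(\phi^{(m)}_{ij}=\ket{mi+j}\). This partner is classical, and as a cyclic Latin square of prime order it has no proper subsquare of order \(\ge2\).
\item A criterion based on aligned orthogonal subsquares could not separate \(6\) from \(7\) anyway. Such subsquares of order \(k\) need two MOLS of order \(k\), so \(k\ge3\), and they need \(n\ge 3k\). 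Such a criterion would therefore exclude \(n=7,8\) as well.
\end{itemize}

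For \(n=4\) you can simply invoke Theorem~\ref{thm:n-2}, since \(2=n-2\). For \(n=5,6\) you need a genuine pattern analysis along the lines of the quantum sudoku rules, which you have not supplied.

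The sufficiency half of item~2 also has a hole. Your block rotation survives orthogonality only if every square has an orthogonal subsquare aligned with the rotated block. For example, rotating an intercalate in one square fails. If the intercalate cell \((r_1,c_1)\) carries the pair \((a,\beta)\), the unique cell carrying \((b,\beta)\) lies outside the intercalate, because the partner is Latin. The two product vectors then have inner product \(\sin\theta\neq0\).

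As noted, aligned orthogonal subsquares force \(k\ge3\) and \(n\ge 3k\ge 9\). So \(7\) and \(8\) are out of reach of your trick. You leave \(7\) as an unspecified base case and omit \(8\) entirely. The product theorem does not cover \(8=2\cdot 4\) either, since two MOQLS of order \(2\) do not exist. Order \(7\) comes from Theorem~\ref{thm:construction}, for instance Corollary~\ref{cor:construction} with \(q=7\); this is a Fourier-type mechanism, not block rotation. Order \(8\) has to be taken from the literature.

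For \(n\ge 9\) in item~2 and \(n\ge16\) in item~3, aligned-block rotation does work. Indeed \(16=(t+1)k\) with \(t=3\), \(k=4\) is exactly the relevant bound. But it rests on the existence, for every such \(n\), of two (resp.\ three) MOLS of order \(n\) with aligned subsquares of order \(3\) (resp.\ \(4\)). That is a nontrivial theorem on incomplete MOLS; you must cite it rather than call it standard.

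Item~1 is essentially fine but needs two repairs.
\begin{itemize}
\item For \(n=3\), note that \(\psi_{21}\perp\psi_{11}\), so \(\psi_{21}\in\operatorname{span}\{\psi_{12},\psi_{13}\}\). If \(\psi_{21}\) were parallel to neither \(\psi_{12}\) nor \(\psi_{13}\), the relations \(\psi_{22}\perp\psi_{21},\psi_{12}\) and \(\psi_{23}\perp\psi_{21},\psi_{13}\) would force both \(\psi_{22}\) and \(\psi_{23}\) to be parallel to \(\psi_{11}\). That is impossible, since \(\psi_{22}\perp\psi_{23}\); the rest then follows by the same reasoning.
\item For \(n\ge4\), one intercalate in any Latin square of order \(n\) suffices. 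The vectors \(\ket{a},\ket{b}\) survive unrotated in the other rows, giving \(n+2\) distinct rays. Your ``extension of the \(4\times4\) block'' cannot work for \(n=5,6,7\), where no proper subsquare of order \(4\) exists.
\end{itemize}
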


\begin{table}[H]
    \centering
    \begin{tabular}{c|cccccccccccccccc}
\(n\) & 2 & 3 & 4 & 5 & 6 & 7 & 8 & 9 & 10 & 11 & 12 & 13 & 14 & 15 & 16 & 17\\
\hline
\(M(n)\) & 0 & 0 & 1 & 1 & 1 & 2-5 & 2-6 & 2-7 & 2-8 & 2-9 & 2-10 & 2-11 & 2-12 & 2-13 & 3-14 & 3-15
\end{tabular}
\caption{Previously known bounds on \(M(n)\) for \(2\leq n\leq17\).}
    \label{tab:oldbounds}
\end{table}

In this paper, we improve both lower and upper bounds on \(M(n)\). In Section~\ref{sec:n-2}, we prove the following.


\begin{theorem}\label{thm:n-2}
    Suppose \(n\geq3\). A set of \(n-2\) \MOQLS{n} is classical. Thus, \(M(n)\leq n-3\).
\end{theorem}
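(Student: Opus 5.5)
We first fix the definitions in the form we will use them. A quantum Latin square \(\Phi\) of order \(n\) is an \(n\times n\) array of unit vectors \(\ket{\Phi_{ij}}\in\CC^n\) such that each row and each column is an orthonormal basis. Two squares \(\Phi,\Psi\) are orthogonal if the \(n^2\) vectors \(\ket{\Phi_{ij}}\otimes\ket{\Psi_{ij}}\) (up to the standard normalisation) form an orthonormal basis of \(\CC^n\otimes\CC^n\). Equivalently, for all \((i,j)\neq(k,l)\),
\[
\braket{\Phi_{ij}}{\Phi_{kl}}\,\braket{\Psi_{ij}}{\Psi_{kl}}=0.
\]
A set is classical if there is a fixed orthonormal basis such that every entry of every square is, up to phase, a basis vector.

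The plan is to recast a set of \(t\) MOQLS as an orthogonality pattern and push the counting argument behind the \(n-1\) bound one step further. Let \(\Phi^{(1)},\dots,\Phi^{(t)}\) be the squares, and add the row square and the column square as two further "coordinates". Row and column are classical in the trivial sense: two cells agree or are orthogonal. For each pair of distinct cells \(c\neq c'\), the mutual orthogonality conditions say that among the \(t+2\) coordinates, at least one entry-pair has inner product \(0\). (For the row and column coordinates this means "different row" or "different column".)

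Now fix an entry vector \(v=\ket{\Phi^{(1)}_{c}}\) and consider the \(n\) cells of the column of \(c\). In square \(1\) their entries form an orthonormal basis. The key step is to show that, when \(t=n-2\), every entry \(w\) of square \(1\) satisfies \(|\braket{v}{w}|\in\{0,1\}\). Take a cell \(c'\) not in the row or column of \(c\) with \(0<|\braket{v}{w}|<1\), where \(w=\ket{\Phi^{(1)}_{c'}}\). Orthogonality then forces some other square \(k\) with \(\braket{\Phi^{(k)}_c}{\Phi^{(k)}_{c'}}=0\). We would count, for the fixed cell \(c\), the pairs \((c',k)\) with \(\Phi^{(k)}_c\perp\Phi^{(k)}_{c'}\). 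For each square \(k\), the entries orthogonal to \(\Phi^{(k)}_c\) inside each row are at most \(n-1\) cells (exactly \(n-1\) when that row's basis contains a multiple of \(\Phi^{(k)}_c\), fewer otherwise only in a weighted sense). The right tool is a weighted count: for each row \(r\neq\) row of \(c\),
\[
\sum_{c'\in r}|\braket{\Phi^{(k)}_c}{\Phi^{(k)}_{c'}}|^2=1,
\]
and summing over \(k\) and over cells gives a total "non-orthogonality mass" of exactly \((n-1)t\) spread over the \((n-1)^2\) cells off the row and column of \(c\).

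Each such cell needs at least one coordinate with mass \(0\). Hence the number of coordinates with positive mass at that cell is at most \(t-1\). With \(t=n-2\) the budget is tight. The supports must cover cells so that each off cell has at least one zero among the \(t\) squares. A rigidity argument should then show that each row contributes its mass of \(1\) per square on a single cell, which is exactly classicality. Concretely, we would bound the number of zeros per square per row by \(n-1\) (equality iff the row basis contains \(\Phi^{(k)}_c\) up to phase). We then need at least \((n-1)^2\) zeros in total against at most \(t(n-1)^2/\!\ldots\) available. The case \(t=n-1\) from \cite{36paper} is where the count is slack, and we expect the main obstacle to be that for \(t=n-2\) the inequality has exactly one unit of slack. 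That slack has to be excluded by a finer argument, perhaps using a second cell \(c''\) or the transposed count over columns, to rule out a single "defective" row. We would first try to show that any defect propagates: one non-basis row forces a defect in many other rows via the column orthonormality, giving a contradiction by double counting.

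Once every entry of square \(1\) is, up to phase, in the basis generated by one of its columns, the same argument applies to all squares. The bases then coincide across squares by the \(n-1\) argument applied pairwise, which yields classicality. Finally, \(M(n)\leq n-3\) is immediate, provided we verify \(n\geq3\) so that \(n-2\geq1\).
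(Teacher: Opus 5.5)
Your proposal has a genuine gap. The step that carries the entire theorem is never proved: that for \(t=n-2\) every entry of a square is parallel or orthogonal to every other entry. Phrases such as ``a rigidity argument should then show'' and ``we would first try to show that any defect propagates'' are placeholders, not arguments.

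Moreover, the local count you set up cannot be refined to close this gap. You also state the orthogonality condition too weakly. For cells \(c,c'\) and squares \(k\neq l\), orthogonality forces \(\braket{\Phi^{(k)}_c}{\Phi^{(k)}_{c'}}=0\) or \(\braket{\Phi^{(l)}_c}{\Phi^{(l)}_{c'}}=0\). So \emph{at most one} square has nonzero inner product at \(c'\), which is much stronger than ``at least one zero''. With the correct condition, each off cell carries mass at most \(1\), and your count gives \(t(n-1)\le(n-1)^2\). This settles \(t=n-1\) immediately. For \(t=n-2\), however, the slack is \(n-1\), a full unit in every row; it is not ``exactly one unit'', and the budget is not tight. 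That slack is also locally realisable: in a given row, square \(1\) can split its unit of mass as \(\tfrac12+\tfrac12\) over two cells while the other \(n-3\) squares fill the remaining \(n-3\) cells. So no counting around one or two fixed cells will exclude it; a global argument is required.

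The paper supplies exactly that global ingredient:
\begin{enumerate}
\item In standard form, rules \#2 and \#3 make the supports of the \(n-2\) entries at an off-first-row cell \((i,j)\) pairwise disjoint subsets of \(\{1,\dots,n\}\setminus\{j\}\). Hence all weights are at most \(2\).
\item A weight-two entry of \(\Psi_1\) at \((2,1)\) with support \(\{2,3\}\) forces further entries with support \(\{2,3\}\) at some \((2,j)\) and \((i,1)\).
\item Lemma~\ref{lem:weight2} then classicalises all squares, shrinking supports and preserving orthogonality. Up to phase, this gives \((\Psi_1')_{21}=\ket{2}\) and \((\Psi_1')_{2j}=(\Psi_1')_{i1}=\ket{3}\).
\item The resulting \(n-2\) MOLS are completed to \(n-1\) MOLS (Theorem~\ref{thm:bruckcompletion}). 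The new square is forced to have \(\ket{2}\) at both \((2,j)\) and \((i,1)\), contradicting its orthogonality with \(\Psi_1'\).
\end{enumerate}

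Finally, your definition of a classical set (one common basis for all squares) is not the paper's. Isotopy applies a separate unitary to each square, so a set is classical if and only if each square is. Under your definition the statement fails as soon as there are two squares: apply a generic unitary to one square of a classical set. The claim in your last paragraph that the bases coincide across squares should therefore be dropped.
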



In Section~\ref{sec:construction}, we provide a construction of large non-classical sets of MOQLS, inspired by a construction of Huang and Li \cite{2MOQLS7} for non-classical sets of \(2\) \MOQLS{n} with $n\geq7$, \(n\) odd. This construction seems to naturally extend to Frobenius rings. For the definition of Frobenius rings, we refer to Section~\ref{sec:prelim}.


\begin{theorem}\label{thm:construction}
    Let \(R\) be a finite Frobenius ring of order \(n\) with unit group \(R^*\). Let \(f:R\to R\) be a map that is not the sum of an additive map and a constant. If \(M\subseteq R\) such that \[\forall m\in M\colon\,x\mapsto f(x)-mx \text{ is a permutation of }R\] and \[\forall m,m'\in M\colon\,m\neq m'\implies m-m'\in R^*,\] then there exist \(|M|\) \MOQLS{n}, exactly one of which is non-classical.
\end{theorem}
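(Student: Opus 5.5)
We index rows, columns and symbols by \(R\) and work in \(\CC^R\) with the standard basis \(\{\ket{r}:r\in R\}\). Since \(R\) is Frobenius, it has a generating character \(\chi\colon(R,+)\to\CC^*\). This is an additive character whose kernel contains no nonzero one-sided ideal. Consequently, for every nonzero \(a\in R\), the character \(x\mapsto\chi(ax)\) is nontrivial, and \(\sum_{x\in R}\chi(ax)=0\).

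\textbf{The classical squares.} For \(m\in M\setminus\{m_0\}\), where \(m_0\in M\) is fixed, we take the classical Latin square \(L_m(i,j)=\ket{j-mi}\). This is a permutation in each row and column. For \(m\ne m'\), the map \((i,j)\mapsto(j-mi,\,j-m'i)\) is a bijection of \(R^2\), because its determinant-type condition is that \(m-m'\) is a unit. So these squares are pairwise orthogonal.

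\textbf{The non-classical square.} We form a "twisted" square in the spirit of Huang and Li, replacing the cell entries by superpositions built from \(f\). A natural choice, indexed by \(m_0\), is
\[
Q(i,j)=\frac1{\sqrt n}\sum_{x\in R}\chi\big(x(j-m_0 i)\big)\,\chi\big(-f(x)\big)\ket{x+\phi(i,j)}
\]
for a suitable affine shift \(\phi\). Here \(\chi(-f(x))\) is a diagonal phase, and the sum over \(x\) is a Fourier-type transform, so it is unitary.

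\textbf{Order of verification.}
\begin{enumerate}[(1)]
\item Check that each row and each column of \(Q\) is an orthonormal basis. This uses the orthogonality relation \(\sum_x\chi(ax)=0\) for \(a\ne0\), together with the diagonal phase being unitary.
\item Check orthogonality of \(Q\) with each \(L_m\). In the standard definition, the cells \(Q(i,j)\otimes L_m(i,j)\), taken over all \((i,j)\), must form an orthonormal basis of \(\CC^R\otimes\CC^R\). Computing inner products reduces to a sum of the form \(\sum_x \chi(\cdot)\) over the solutions of a linear condition in \(i\). After substituting, a sum over \(x\) appears in which \(f(x)-mx\) occurs. Since \(x\mapsto f(x)-mx\) is a permutation, that sum collapses to a delta, giving orthonormality.
\item Show that \(Q\) is non-classical, that is, not equivalent to a classical square. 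The set of vectors in \(Q\) must not be, up to phases, a single orthonormal basis. Equivalently, \(\abs{\braket{Q(i,j)}{Q(i',j')}}\) must take some value other than \(0\) and \(1\). This inner product is \(\frac1n\big|\sum_x\chi\big(f(x+a)-f(x)-bx+c\big)\big|\) for suitable parameters \(a,b,c\). If every such sum had modulus \(0\) or \(n\), then a standard argument would force \(f(x+a)-f(x)\) to be affine in \(x\) for all \(a\), and hence \(f\) to be additive plus a constant. This contradicts the hypothesis on \(f\).
\end{enumerate}

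\textbf{Main obstacle.} Getting the exact form of \(Q\), including the shift \(\phi\) and the index \(m_0\), so that orthogonality to every \(L_m\) reduces precisely to the permutation hypothesis on \(f(x)-mx\) is the main difficulty. I would first try \(\phi(i,j)=i\), with the phase carrying \(f\), and adjust by matching with Huang and Li's odd-order construction.

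A secondary difficulty is the character-sum argument in step (3) over a general Frobenius ring. There I would use that \(\chi\circ(\text{nonconstant additive shift})\) ranges over all characters.
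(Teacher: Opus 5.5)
There is a genuine gap: you never produce a valid non-classical square, and the template you propose cannot work in general. In your \(Q\) the phase \(\chi(-f(x))\) is the same for every cell, so \(f\) interacts with the cell only through the translation \(\phi(i,j)\). If \(\phi\) is constant, \(Q(i,j)\) depends only on \(j-m_0i\), so all entries lie in one orthonormal basis and \(Q\) is classical. If \(\phi\) is not constant, some row or column contains two cells with different translations. The inner product of those two entries is then a character sum over a difference of translates of \(f\), and the hypotheses say nothing about such sums. Concretely, for commutative \(R\) and your first try \(\phi(i,j)=i\), up to a phase factor
\[
\braket{Q(i,j)}{Q(i',j)}=\frac1n\sum_{y\in R}\chi\big(f(y-i)-f(y-i')+m_0(i-i')y\big).
\]
Take \(R=\FF_7\), \(\chi(x)=\omega^x\) with \(\omega\) a primitive \(7\)th root of unity, and \(f(x)=x\) for \(x\in\{1,2,4\}\), \(f(x)=0\) otherwise. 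This \(f\) satisfies all hypotheses with \(M=\{2,6\}\). With \(m_0=2\), \(i=1\), \(i'=0\), the modulus is \(\frac17\left|1+\omega-\omega^2-\omega^6\right|\neq0\). So \(Q\) is not even a quantum Latin square, and step (2) likewise cannot reduce to the permutation property of \(x\mapsto f(x)-mx\).

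Step (3) is also unsound as stated. The condition \(\left|\sum_x\chi(g(x))\right|=n\) only gives \(g(x)-g(0)\in\ker\chi\), which can be a nonzero subgroup (e.g.\ the trace-zero elements of \(\FF_{p^h}\), \(h>1\)). The generating-character property helps only if you can multiply the argument of \(\chi\) by arbitrary ring elements. Finally, \(L_m(i,j)=\ket{j-mi}\) is a Latin square only when \(m\in R^*\), but the hypothesis gives only \(m-m_0\in R^*\); for instance, if \(0\in M\) and \(m_0\neq0\), then \(L_0\) has constant columns.

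The missing idea is to let the row index act as a ring multiplier on \(f\) inside the character, that is, to restrict the characters of \(R^2\) to the graph of \(f\). First normalise: replace \(f\) by \(x\mapsto f(x)-m_0x\) and \(M\) by \(M-m_0\). This preserves both hypotheses and the non-affineness of \(f\), makes \(f\) a permutation, and makes every \(m\in M\setminus\{0\}\) a unit. Then take \(\psi_{ij}=\frac1{\sqrt n}\sum_{x\in R}\chi(if(x)+jx)\ket{x}\), together with the classical squares with entries \(\ket{mi+j}\) for \(m\in M\setminus\{0\}\).

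\begin{enumerate}[(1)]
\item Rows of \(\Psi\) are orthonormal because \(\sum_x\chi((j'-j)x)=0\). Columns are orthonormal because \(\sum_x\chi((i'-i)f(x))=0\), since \(f\) is a permutation.
\item If \(mi+j=mk+\ell\), the relevant inner product is \(\frac1n\sum_x\chi\big((i-k)(f(x)-mx)\big)\). This vanishes for \(i\neq k\) exactly because \(x\mapsto f(x)-mx\) is a permutation.
\item Suppose \(\Psi\) is classical, with \(f(0)=0\). Each first-column entry must then equal a first-row entry, which gives \(\chi(if(x))=\chi(jx)\). Hence \(\chi\big(i(f(x+y)-f(x)-f(y))\big)=1\) for all \(i\), and the generating character forces \(f\) to be additive.
\end{enumerate}
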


Examples of finite Frobenius rings are modular rings and finite fields.

Note that Theorem~\ref{thm:construction} is closely related to the concept of orthomorphisms and complete mappings: \(f\) is an \emph{orthomorphism} if both \(f\) and the map \(x\mapsto f(x)-x\) are permutations; it is called a \emph{complete mapping} if both \(f\) and the map \(x\mapsto f(x)+x\) are permutations. The construction of \(2\) \MOQLS{n}, \(n\geq7\) and \(n\) odd, given in \cite{2MOQLS7} is equivalent to a construction of a complete mapping \(f:\ZZ/n\ZZ\to\ZZ/n\ZZ\) that is not of the form \(x\mapsto ax+b\).

If \(n=q\) is a prime power, we can use the structure of \(\FF_q\), the finite field with \(q\) elements, to get good constructions of such functions determining few directions \cite{BBS,LS1981}. Using a family of such functions, we construct large sets of \(t\) \MOQLS{q} where \(q\) is a prime power.

\begin{corollary}\label{cor:construction}
    If \(q\) is a prime power and \(q-1\) has a proper divisor \(d\neq1\), then there exist \(d-1\) \MOQLS{q}, exactly one of which is non-classical.
\end{corollary}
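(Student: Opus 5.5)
We will derive Corollary~\ref{cor:construction} from Theorem~\ref{thm:construction} with \(R=\FF_q\). Here \(R^*=\FF_q\setminus\{0\}\), so the second condition on \(M\) holds automatically for distinct elements. It therefore suffices to find a map \(f:\FF_q\to\FF_q\) with two properties. First, \(f\) is not of the form \(x\mapsto L(x)+c\) with \(L\) additive. Second, there is a set \(M\subseteq\FF_q\) of size \(d-1\) such that \(x\mapsto f(x)-mx\) is a permutation for every \(m\in M\).

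We take the classical function determining few directions \cite{LS1981,BBS}: \(f(x)=x^{(q-1)/d+1}\). Write \(e=(q-1)/d\) and \(H=\{y\in\FF_q^*: y^d=1\}\) for the subgroup of \(d\)-th roots of unity, of order \(d\). For \(x\neq0\), \(x^e\) lies in \(H\) and is constant on cosets of the subgroup \(K\) of index \(d\) (the \(d\)-th powers), with \(|K|=e\). So on each coset \(aK\) the map \(x\mapsto f(x)-mx\) equals \(x\mapsto(a^e-m)x\). Here \(a^e\in H\) runs over all of \(H\) as \(a\) runs over coset representatives.

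Take \(M\) to be a subset of \(\FF_q\setminus H\) of the right size. For \(m\notin H\) the coefficient \(c_a=a^e-m\) is nonzero, so the coset \(aK\) is mapped bijectively onto \(c_aaK\). The map is then a permutation, with \(0\mapsto0\), iff the cosets \(c_aaK\) are distinct as \(a\) varies. Since \(K\) has index \(d\), this says the map \(a\mapsto c_a a\) induces a permutation of \(\FF_q^*/K\cong\ZZ/d\ZZ\). In terms of \(\zeta=a^e\in H\), the class of \(c_aa\) is the class of \(a\) shifted by the class of \((\zeta-m)\).

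The cleanest choice is therefore \(M\subseteq K\cup\{0\}\) together with a condition making \(\zeta-m\in K\) for all \(\zeta\in H\). This is hard in general, so instead we use the standard direction count for this \(f\). The graph of \(f\) determines roughly \((q-1)/e+\dots\) directions, and the number of non-determined directions is at least \(d-1\). Concretely, \(m\) is non-determined iff \(f(x)-mx\) is injective, and we would check this via the coset criterion. If the condition \(\zeta-m\in K\) for all \(\zeta\in H\) turns out to be the requirement, we count such \(m\), expecting exactly \(e-1\) or \(d-1\) of them. Possibly we should swap the roles and take \(f(x)=x^{d+1}\) when \(d\mid q-1\) with the complementary index. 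We would choose whichever exponent makes the count come out to \(d-1\), guided by \cite{BBS}.

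Non-additivity is the easy part. Because \(d\) is a proper divisor with \(d\neq1\), we have \(1<e<q-1\), so \(e+1\) is not a power of the characteristic \(p\) (it is \(\le q-1\) and \(>1\); the case \(e+1=p^k\) must be excluded or handled by the choice of exponent). A polynomial of degree \(<q\) that is additive plus a constant has only monomials \(x^{p^i}\) and \(1\), so \(f\) is not of that form. Theorem~\ref{thm:construction} then gives \(|M|=d-1\) MOQLS, exactly one non-classical.

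The main obstacle is verifying the permutation property exactly for \(d-1\) values of \(m\), and selecting the right exponent for this. We would first work out the coset criterion concretely for \(f(x)=x^{e+1}\). If it fails, we would instead use \(f(x)=x\,g(x^{e})\)-type functions from \cite{BBS} that are tailored to permute \(\FF_q^*/K\).
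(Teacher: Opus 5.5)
The reduction to Theorem~\ref{thm:construction} is correct. However, the one substantive step, exhibiting a non-additive \(f\) with at least \(d-1\) undetermined directions, is never proved, and for the function you propose it is false. Take \(q=7\), \(d=3\). Then \(e=2\) and \(f(x)=x^3\). A direct check shows that \(x^3-mx\) is a permutation of \(\FF_7\) for no \(m\in\FF_7\): for \(m=0\), \(1^3=2^3\); for \(m=3\), both \(1\) and \(5\) map to \(5\); the other values fail similarly. So \(M=\emptyset\) instead of \(|M|=2\). Your own sufficient condition, that all \(\zeta-m\) with \(\zeta\in H\) lie in one coset of \(K\), can never hold when \(d>e\), since it puts \(d\) distinct elements into a coset of size \(e\). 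Neither exponent works uniformly: the swapped choice \(x^{d+1}\) is again \(x^3\) for \(q=7\), \(d=2\). Which one (if either) works therefore depends on \((q,d)\), and you give no argument for either. The non-additivity step also has a real hole. For \(q=9\), \(d=4\), your \(f\) is \(x^3\), which is \(\FF_3\)-linear, so \(\Psi^{(f)}\) would be classical.

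The missing idea is to use a function whose direction set can be computed exactly and uniformly in \(d\). The paper takes Megyesi's function from Example~\ref{ex:f2}: with \(H\) the subgroup of order \(d\), set \(f(x)=x\) on \(H\) and \(f(x)=0\) off \(H\). Pairs of points on the same piece give directions \(0\) and \(1\). Mixed pairs give \(1/(1-z)\) with \(z\in\FF_q\setminus H\). Since \(z\mapsto 1/(1-z)\) is a bijection from \(\FF_q\setminus\{1\}\) onto \(\FF_q\setminus\{0\}\), the undetermined directions are exactly the \(d-1\) values \(1/(1-z)\) with \(z\in H\setminus\{1\}\), and \(M\) is taken to be this set. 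Non-additivity is then immediate: for \(x\notin H\cup\{0\}\), additivity would give \(f(x+1)=f(x)+f(1)=1\), while \(f(x+1)\in\{0,x+1\}\).

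For odd \(q\) a power map can be rescued, but not as you set it up. The single function \(x^{(q+1)/2}\) has exactly \((q-3)/2\geq d-1\) undetermined directions for every admissible \(d\), so any \((d-1)\)-subset of them works as \(M\). That route still requires proving the count, and it does not cover even \(q\).
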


Theorem~\ref{thm:n-2} and Corollary~\ref{cor:construction} yield improvements to the previous bounds on \(M(n)\) for small values of \(n\) in Table~\ref{tab:newbounds} (compare with Table~\ref{tab:oldbounds}).

\begin{table}[H]
    \centering
    \begin{tabular}{c|cccccccccccccccc}
\(n\) & 2 & 3 & 4 & 5 & 6 & 7 & 8 & 9 & 10 & 11 & 12 & 13 & 14 & 15 & 16 & 17\\
\hline
\(M(n)\) & 0 & 0 & 1 & 1 & 1 & 2-4 & 2-5 & 3-6 & 2-7 & 4-8 & 2-9 & 5-10 & 2-11 & 2-12 & 4-13 & 7-14
\end{tabular}
\caption{New bounds on \(M(n)\) for \(2\leq n\leq17\).}
    \label{tab:newbounds}
\end{table}

\section{Preliminaries}\label{sec:prelim}

\subsection{Latin squares}

A \emph{Latin square} of order \(n\) is an \(n\times n\) matrix containing elements of a set of size \(n\) such that each row and each column contains every element exactly once.
Two Latin squares \(A=\left(a_{ij}\right)_{1\leq i,j\leq n}\) and \(B=\left(b_{ij}\right)_{1\leq i,j\leq n}\) are \emph{orthogonal} if all \(n^2\) tuples \((a_{ij},b_{ij})\) are different.
A set of \emph{\(t\) mutually orthogonal Latin squares of order \(n\)}, abbreviated \(t\) MOLS\((n)\), is a set of \(t\) Latin squares of order \(n\) that are pairwise orthogonal.

\subsection{Quantum Latin squares}

A \emph{quantum Latin square of order \(n\)} is an \(n\times n\) matrix over \(\CC^n\) such that each row and each column forms an orthonormal basis of \(\CC^n\).
Two quantum Latin squares are \emph{isotopic} if they can be obtained from one another by
\begin{enumerate}[(i)]
    \item multiplying each entry by a phase factor,
    \item permuting rows and columns, and
    \item\label{item:u} applying a unitary transformation on all entries.
\end{enumerate}
A quantum Latin square is \emph{classical} or \emph{not genuinely quantum} if, up to isotopy, all entries are contained in \(\left\{\ket{1},\dots,\ket{n}\right\}\), a fixed orthonormal basis.
This property is independent of the chosen basis by item~(\ref{item:u}) in the definition of isotopy.

\subsection{Mutually orthogonal quantum Latin squares (MOQLS)}

Two quantum Latin squares \(\left(\psi_{ij}\right)_{1\leq i,j\leq n}\) and \(\left(\phi_{ij}\right)_{1\leq i,j\leq n}\) are \emph{orthogonal} if \[\left\{\psi_{ij}\otimes\phi_{ij}\colon\,i,j\in\{1,\dots,n\}\right\}\] is an orthonormal basis of \(\CC^n\otimes\CC^n\).
A set of \emph{\(t\) mutually orthogonal quantum Latin squares of order \(n\)}, denoted \(t\) \MOQLS{n}, is a set of \(t\) quantum Latin squares of order \(n\) such that any two of them are orthogonal.
Two sets of \(t\) MOQLS\((n)\) are \emph{isotopic} if they can be obtained from one another by
\begin{enumerate}[(i)]
    \item multiplying each entry by a phase factor,
    \item permuting rows and columns, simultaneously among all squares, and
    \item\label{item:lu} applying, separately to each square, a unitary transformation to all of its entries.
\end{enumerate}
We say that a set of \(t\) MOQLS\((n)\) is \emph{classical} or \emph{not genuinely quantum} if, up to isotopy, all entries are contained in \(\left\{\ket{1},\dots,\ket{n}\right\}\). Again, by item~(\ref{item:lu}) above, this property does not depend on the chosen orthonormal basis.

\subsection{Patterns of quantum Latin squares}

The \emph{pattern} of a matrix is the binary matrix that is obtained from it by replacing each nonzero entry by a one.
A \emph{unitary pattern} is a pattern of a unitary matrix.
Similarly, the pattern of a vector is the vector obtained from it by replacing each nonzero entry by a one.
The \emph{support} of a vector is the set of coordinates where it is nonzero.
The \emph{weight} of a vector is the size of its support, that is, the number of ones in its pattern.
The \emph{pattern} of a quantum Latin square is the matrix that is obtained from it by replacing each entry by its pattern.
For example:

\begin{table}[H]
    \centering
\begin{tabular}{|c|c|c|c|}
    \hline
    \(\ket{1}\) & \(\ket{2}\) & \(\ket{3}\) & \(\ket{4}\)\\
    \hline
    \(\ket{2}\) & \(\ket{1}\) & \(\ket{4}\) & \(\ket{3}\)\\
    \hline
    \(\ket{3}\) & \(\ket{4}\) & \(\frac1{\sqrt{2}}\left(\ket{1}+\ket{2}\right)\) & \(\frac1{\sqrt{2}}\left(\ket{1}-\ket{2}\right)\)\\
    \hline
    \(\ket{4}\) & \(\ket{3}\) & \(\frac1{\sqrt{2}}\left(\ket{1}-\ket{2}\right)\) & \(\frac1{\sqrt{2}}\left(\ket{1}+\ket{2}\right)\)\\
    \hline
\end{tabular}
\quad
\(\xrightarrow{\text{pattern}}\)
\quad
\begin{tabular}{|c|c|c|c|}
    \hline
    1000 & 0100 & 0010 & 0001\\
    \hline
    0100 & 1000 & 0001 & 0010\\
    \hline
    0010 & 0001 & 1100 & 1100\\
    \hline
    0001 & 0010 & 1100 & 1100\\
    \hline
\end{tabular}
\end{table}

Note that the pattern depends on the chosen orthonormal basis \(\left\{\ket{1},\dots,\ket{n}\right\}\).
We recall the following ``quantum sudoku rules'' that were introduced in \cite{36paper}. We refer to that paper for a more detailed explanation.

\begin{mdframed}
\begin{enumerate}
    \item[\#1] (Standard form.) We can apply unitary transformations and assume that, up to isotopy, the squares are in standard form where the first row in the pattern of each quantum Latin square is
    \[\begin{tabular}{|c|c|c|c|}
    \hline
    10\dots0 & 010\dots0 & \(\cdots\) & 0\dots01\\
    \hline
    \end{tabular}\]
    \item[\#2] (Unitary patterns.) Each row and each column of a quantum Latin square form a unitary pattern, by definition of quantum Latin squares. In particular, if there is an entry of weight one with support \(\{k\}\), then all other entries in the same row and column have a zero in position \(k\).
    \item[\#3] (Zero overlap.) From the second row onward, at a fixed cell \((i,j)\) and coordinate \(k\), two orthogonal squares cannot both have a pattern in that cell with a 1 in the \(k\)th coordinate. Indeed, since the quantum Latin squares are mutually orthogonal, \(\psi_{ij}\otimes\phi_{ij}\) and \(\psi_{1k}\otimes\phi_{1k}=\ket{k}\otimes\ket{k}\) are orthogonal, so \(\braket{k}{\psi_{ij}}\braket{k}{\phi_{ij}}=0\).
\end{enumerate}
\end{mdframed}

\subsection{Frobenius rings}

Let $R$ be a finite ring of order \(n\) with identity. We denote the set of its invertible elements by \(R^*\). An \emph{additive character} of $R$ is a group morphism \(\chi:(R,+)\to(\mathbb{T},\cdot)\) where \(\mathbb{T}=\{z\in\mathbb{C}:|z|=1\}\) is the complex unit circle.

A \emph{Frobenius ring} is a ring $R$ that admits a \emph{generating character}: an additive character $\chi$ such that every additive character of \(R\) is uniquely of the form \(x\mapsto\chi(ax)\) for some \(a\in R\).

Examples of Frobenius rings are the modular ring \(\ZZ/n\ZZ\) (with generating character \(x\mapsto\omega^x\), where \(\omega\) is a primitive \(n\)th root of unity) and the finite field \(\FF_q\) (with generating character \(x\mapsto\omega^{\tr(x)}\), where \(\omega\) is a primitive \(p\)th root of unity and \(q=p^h\), \(p\) prime), see also \cite{Frobenius}.

\begin{lemma}\label{lem:frobenius}
If \(\chi\) is a generating character, then \(\sum_{x\in R}\chi(ax)=0\) for all \(a\in R\setminus\{0\}\).
\end{lemma}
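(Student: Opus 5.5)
The plan is the usual character-sum argument: show that the map \(x\mapsto\chi(ax)\) is a \emph{nontrivial} additive character of \((R,+)\), then use that summing a nontrivial character over a finite group gives zero.

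First, fix \(a\in R\setminus\{0\}\) and put \(\chi_a(x)=\chi(ax)\). Since multiplication by \(a\) is additive and \(\chi\) is a group morphism into \(\mathbb{T}\), \(\chi_a\) is an additive character of \(R\).

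Second, show \(\chi_a\) is not the trivial character. Here we use the generating property. The trivial character \(x\mapsto 1\) equals \(x\mapsto\chi(0\cdot x)\), so it corresponds to \(a=0\). By the uniqueness clause in the definition of a generating character, \(\chi_a\) cannot also be trivial when \(a\neq 0\). Hence there is some \(y\in R\) with \(\chi(ay)\neq 1\). This is the only step that uses the Frobenius hypothesis, and it is essentially immediate from uniqueness. One could instead argue that \(\ker\chi\) contains no nonzero right ideal, but uniqueness is cleaner.

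Third, run the translation trick. Let \(S=\sum_{x\in R}\chi(ax)\). Since \(x\mapsto x+y\) is a bijection of \(R\),
\[S=\sum_{x\in R}\chi(a(x+y))=\chi(ay)\sum_{x\in R}\chi(ax)=\chi(ay)\,S.\]
Thus \((1-\chi(ay))S=0\), and since \(\chi(ay)\neq1\) we get \(S=0\).
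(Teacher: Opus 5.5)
Your proof is correct and follows the paper's argument essentially step for step. The nontriviality of \(x\mapsto\chi(ax)\) comes from the uniqueness clause in the definition of a generating character, and the translation \(x\mapsto x+y\) then gives \(S=\chi(ay)S\), hence \(S=0\).
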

\begin{proof}
Since \(x\mapsto\chi(ax)\) is non-trivial (the trivial character is represented uniquely by \(a=0\)), there exists an element \(b\in R\) with \(\chi(ab)\neq1\). If we define \(S=\sum_{x\in R}\chi(ax)\), then
\[S=\sum_{x\in R}\chi(a(x+b))=\sum_{x\in R}\chi(ax)\chi(ab)=\chi(ab)S,\]
because $\chi$ is additive. Since \(\chi(ab)\neq1\), we conclude that $S=0$.
\end{proof}

\section{\texorpdfstring{A set of \(n-2\) \MOQLS{n} is classical}{A set of n-2 MOQLS(n) is classical}}\label{sec:n-2}

We make use of the following classical result.

\begin{theorem}[\cite{Shrikhande}]\label{thm:bruckcompletion}
    A set of \(n-3\) \MOLS{n} can be extended to a set of \(n-1\) \MOLS{n} if \(n\neq4\).
\end{theorem}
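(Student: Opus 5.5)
The plan is to pass from Latin squares to nets and to reduce the completion problem to a strongly regular graph characterisation. A set of \(n-3\) \MOLS{n}, together with the row and column classes, gives a net of order \(n\) and degree \(k=n-1\). The points are the \(n^2\) cells. There are \(k\) parallel classes of \(n\) lines each, and two lines from different classes meet in exactly one point. Extending the set to \(n-1\) \MOLS{n} means adding two further parallel classes so that the result is an affine plane of order \(n\). Any such parallel class is a partition of the cells into \(n\) transversals, that is, sets of \(n\) cells that are pairwise non-collinear in the net.

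\textbf{Step 1: the deficiency graph.} Let \(\Gamma\) be the graph on the \(n^2\) points in which two points are adjacent if they are \emph{not} collinear in the net. A point lies on \(k\) lines, each containing \(n-1\) further points, so its degree in \(\Gamma\) is \(n^2-1-k(n-1)=2(n-1)\).

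Next I count common neighbours, using that two lines from distinct classes meet exactly once.
\begin{itemize}
\item For collinear \(x,y\): each of the other \(k-1\) lines through \(x\) meets each of the other \(k-1\) lines through \(y\) in at most one point. This gives the number of points collinear with both, and hence the number \(\mu\) of common \(\Gamma\)-neighbours.
\item For non-collinear \(x,y\): the analogous count gives \(\lambda\).
\end{itemize}
I expect this to yield the parameters \((n^2,2(n-1),n-2,2)\), which are exactly those of the lattice graph \(L_2(n)\), the rook's graph on an \(n\times n\) board.

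\textbf{Step 2: \(\Gamma\) is \(L_2(n)\) when \(n\neq4\).} This is the main obstacle, since the Shrikhande graph shows it fails for \(n=4\). I would follow Shrikhande's clique argument. Take adjacent \(x,y\) and let \(C(x,y)\) be the set consisting of \(x\), \(y\) and their \(n-2\) common neighbours. The goal is to show that \(C(x,y)\) is a clique.

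Suppose two common neighbours \(u,v\) were non-adjacent. Counting paths of length two, using \(\mu=2\) for the pair \(u,v\) and \(\lambda=n-2\) along the edges \(xu\), \(xv\), \(yu\), \(yv\), should force too many common neighbours inside \(\Gamma(x)\). Equivalently, the subgraph induced on \(\Gamma(x)\) must be a disjoint union of two \(K_{n-1}\), and the only other option surviving the counting occurs when \(n=4\). The first thing to try is exactly this: analyse the local graph \(\Gamma(x)\). It is \((n-2)\)-regular on \(2(n-1)\) vertices, and any two non-adjacent vertices in it have at most one common neighbour there besides \(x\). For \(n>4\) this should force \(\Gamma(x)\cong 2K_{n-1}\).

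It then follows that every vertex lies in exactly two maximal cliques of size \(n\). Any two such cliques meet in at most one point. A global argument then splits the cliques into two families, each partitioning the point set. Two cliques through a common point go into different families, and this assignment propagates consistently by connectivity.

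\textbf{Step 3: completion.} Each \(n\)-clique of \(\Gamma\) is a set of pairwise non-collinear points, so it meets every line of the net in exactly one point. Hence each of the two families is a parallel class of transversals. A clique from one family and a clique from the other meet in exactly one point, because in \(L_2(n)\) a row and a column meet once. Adding both families to the net gives \(n+1\) parallel classes on \(n^2\) points with the affine plane property. Reading off the two new classes as Latin squares yields \(n-1\) \MOLS{n} that contain the original \(n-3\).
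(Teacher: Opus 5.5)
The paper gives no proof of this statement; it only cites Shrikhande. Your outline is the classical argument behind that citation. The non-collinearity graph \(\Gamma\) of a deficiency-two net is strongly regular with the parameters \((n^2,2(n-1),n-2,2)\) of \(L_2(n)\), and your counts are right. For \(n\neq4\) such a graph must be \(L_2(n)\), and its two families of \(n\)-cliques are the missing parallel classes. Step~3 is fine.

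The proposal is not yet a proof, though. The two steps that carry the actual content are only asserted, and one of them is argued incorrectly.

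\textbf{Step~2.} The four-vertex count you first suggest gives no contradiction by itself: \(\mu=2\) for the non-adjacent pair \(u,v\) only says that \(x,y\) are their only common neighbours. The contradiction needs a count over a whole neighbourhood.

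Put \(\Delta=\Gamma(x)\) and fix \(u\in\Delta\). Let \(N\) be its \(n-2\) neighbours in \(\Delta\), \(U\) its \(n-1\) non-neighbours in \(\Delta\setminus\{u\}\), and \(m\) the number of non-edges inside \(N\).
\begin{itemize}
\item By regularity, a vertex \(w\in N\) with \(d_w\) non-neighbours in \(N\) has exactly \(d_w\) neighbours in \(U\). So \(2m\) edges join \(N\) and \(U\).
\item Each vertex of \(U\) has at most one neighbour in \(N\), because \(x\) is already a common neighbour of it and \(u\). Hence \(2m\le n-1\).
\item If \(a,c\in N\) are non-adjacent, then \(x,u\) are their only common neighbours. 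So their neighbourhoods in \(N\setminus\{a,c\}\), of sizes \(n-3-d_a\) and \(n-3-d_c\), are disjoint in a set of size \(n-4\). This gives \(d_a+d_c\ge n-2\). The non-edges at \(a\) or \(c\) number \(d_a+d_c-1\), so \(m\ge n-3\).
\end{itemize}
For \(n\ge6\) the bounds \(2m\le n-1\) and \(m\ge n-3\) are incompatible. So every \(N\) is a clique, \(\Delta\) is a disjoint union of cliques, and \(\Delta\cong2K_{n-1}\).

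For \(n=5\) the bounds still allow \(m=2\), i.e.\ exactly one triangle through \(u\). If this held for every \(u\), the triangles would partition the \(8\) vertices of \(\Delta\), which is impossible. So some \(u\) lies in a \(K_4\)-component, and \(\Delta\cong2K_4\). For \(n=4\) the cycle \(C_6\) survives, which gives the Shrikhande graph; \(n=3\) is trivial.

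\textbf{Splitting the cliques into two families.} ``Propagates consistently by connectivity'' is not a proof. Connectivity of the clique-intersection graph does not exclude odd cycles, and a consistent two-colouring needs exactly that. The following works.
\begin{itemize}
\item Let \(C\) be an \(n\)-clique and \(z\notin C\). If \(z\) had two neighbours \(x,y\in C\), then \(z\in C(x,y)=C\). So \(z\) has at most one neighbour in \(C\).
\item Exactly \(n(n-1)\) edges leave \(C\), and there are \(n^2-n\) points outside \(C\). So every \(z\notin C\) has exactly one neighbour in \(C\).
\item Hence exactly one of the two cliques through \(z\) meets \(C\). Therefore \(C\) together with the cliques disjoint from it partitions the points.
\item The \(n\) other cliques meeting \(C\) are pairwise disjoint, since a common point would have two neighbours in \(C\).
\end{itemize}
These are your two parallel classes. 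With these two arguments supplied, your proof is complete.
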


In particular, a set of \(n-2\) \MOLS{n} can be extended to a set of \(n-1\) \MOLS{n}. This also holds for \(n=4\), which is not so difficult to verify directly.

\begin{lemma}[{\cite[Lemma~13]{36paper}}]\label{lem:weight2}
    If \(\Psi\) is a quantum Latin square without entries of weight three or more, then there exists a classical quantum Latin square \(\Psi'\) such that whenever \(\Psi\) and \(\Phi\) are orthogonal, \(\Psi'\) and \(\Phi\) are orthogonal as well.
\end{lemma}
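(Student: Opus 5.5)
The plan is to exploit the very rigid structure forced on a unitary pattern whose entries have weight at most two. I will first classify the weight-two entries into even cycles, then choose $\Psi'$ by putting basis vectors alternately around each cycle, and finally reduce orthogonality with an arbitrary $\Phi$ to a linear condition on the squared moduli $|\braket{a}{\psi_{ij}}|^2$.

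\textbf{Step 1: pairing and cycles.} Fix a row of $\Psi$. Its entries form an orthonormal basis, and each entry has weight one or two. Two vectors of weight two whose supports meet in exactly one coordinate $a$ have inner product equal to a single nonzero product, so they cannot be orthogonal. Likewise, a weight-one vector $\ket{a}$ is orthogonal to a vector only if that vector vanishes at $a$. Hence the supports in a row are either disjoint or equal. Each weight-two support $\{a,b\}$ must then occur exactly twice in the row, because two orthonormal vectors are needed to span $\mathrm{span}\{\ket a,\ket b\}$. The same holds in each column.

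Fix $\{a,b\}$ and form a graph on the cells with support $\{a,b\}$, joining two cells when they are row partners or column partners. Every vertex has exactly one row edge and one column edge, so each component is an even cycle whose edges alternate between row and column edges. Define $\Psi'$ by keeping the weight-one entries and assigning $\ket a,\ket b$ alternately around each cycle; this is consistent because the cycles are even. Each row and each column of $\Psi'$ is then a permutation of $\{\ket1,\dots,\ket n\}$, so $\Psi'$ is a classical quantum Latin square.

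\textbf{Step 2: orthogonality as an operator identity.} Write $Q_{ij}=\ket{\phi_{ij}}\bra{\phi_{ij}}$. The square $\Psi$ is orthogonal to $\Phi$ if and only if
\[\sum_{i,j}\ket{\psi_{ij}}\bra{\psi_{ij}}\otimes Q_{ij}=I.\]
Apply the dephasing map (keep only the diagonal in the basis $\{\ket a\}$) on the first tensor factor. This gives $\sum_{i,j}|\braket{a}{\psi_{ij}}|^2\,Q_{ij}=I$ for every $a$.

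For the classical square $\Psi'$ there are no off-diagonal terms. Therefore $\Psi'\perp\Phi$ holds if and only if $\sum_{(i,j):\,\psi'_{ij}=\ket a}Q_{ij}=I$ for every $a$. So everything reduces to comparing these diagonal identities for $\Psi$ and for $\Psi'$.

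\textbf{Step 3: moduli along a cycle.} Along a cycle $C$ with support $\{a,b\}$, a row pair $c,c'$ is an orthonormal basis of a two-dimensional space. Hence $|\psi_c(a)|^2=|\psi_{c'}(b)|^2=1-|\psi_{c'}(a)|^2$, and the same relation holds for column pairs. Consequently there is $x_C\in[0,1]$ such that the cells at even positions of $C$ have $|\psi(a)|^2=x_C$ and those at odd positions have $1-x_C$.

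Let $E_C$ and $O_C$ be the sums of the $Q_c$ over the even and odd cells of $C$. The identity for $\Psi$ at coordinate $a$ becomes
\[W_a+\sum_C\bigl(x_CE_C+(1-x_C)O_C\bigr)=I,\]
where $W_a$ is the contribution of weight-one cells. The identity for $\Psi'$ is the same equation with every $x_C\in\{0,1\}$, the value being determined by which parity class received $\ket a$.

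\textbf{Main obstacle.} The hard step is to show that the $0/1$ choice made in Step 1 preserves this identity; the orientation of each cycle may need to be fixed carefully. The first thing to try is to prove $E_C=O_C$ for every cycle, in which case $x_C$ is irrelevant and any alternating assignment works. For this I would combine three ingredients: the off-diagonal identities $\sum_c\psi_c(a)\overline{\psi_c(b)}\,Q_c=0$, which involve only the cycle cells; the fact that the rows and columns of $\Phi$ are themselves orthonormal bases; and the phase relations forced by orthogonality of partners, namely $\psi_c(a)\overline{\psi_{c'}(a)}=-\psi_c(b)\overline{\psi_{c'}(b)}$.

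If $E_C=O_C$ cannot be established directly, the fallback is linear. The diagonal identities are affine in the vector $(x_C)_C$ and are satisfied by a point of $[0,1]^m$. I would show that the off-diagonal constraints make each $x_C$ move freely whenever $0<x_C<1$, so the solution set is a product of intervals. It would then contain a vertex of the cube, and that vertex gives the required classical $\Psi'$.
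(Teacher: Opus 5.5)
Your Steps 1--3 are correct: the support dichotomy, the even alternating cycles, the reduction of $\Psi'\perp\Phi$ to $\sum_{\psi'_c=\ket a}Q_c=I$, and the parametrisation by $x_C$. But the proof stops exactly where the content of the lemma lies, and the completion you propose does not work.

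\textbf{Counterexample to independent orientation.} If each cycle is oriented independently, $\Psi'$ need not be orthogonal to $\Phi$, and $E_C=O_C$ is false in general. Take $n=4$, $L(i,j)=i+j$ and $M(i,j)=\omega i+j$ over $\FF_4$. Let $\Psi$ be obtained from $L$ by applying to all entries the unitary that sends $\ket0,\ket1$ to $(\ket0\pm\ket1)/\sqrt2$ and fixes $\ket\omega,\ket{\omega^2}$. Then $\Psi$ has weights at most two and $\Psi\perp M$.

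The cells with support $\{0,1\}$ form the two intercalates $C=\{(0,0),(0,1),(1,1),(1,0)\}$ and $D=\{(\omega,\omega),(\omega,\omega^2),(\omega^2,\omega^2),(\omega^2,\omega)\}$. Both carry the same vector $v=(\ket0+\ket1)/\sqrt2$ on their $L$-symbol-$0$ cells. Here $E_C$ projects onto $\mathrm{span}\{\ket0,\ket{\omega^2}\}$ and $O_C$ onto $\mathrm{span}\{\ket1,\ket\omega\}$, so $E_C\neq O_C$.

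Now orient $C$ by $v\mapsto\ket0$ and $D$ by $v\mapsto\ket1$. The cells $(0,0)$ and $(\omega,\omega^2)$ both receive $\ket0$, while $M$ equals $0$ on both, so $\Psi'\not\perp M$.

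\textbf{The fallback also fails.} In this example the diagonal constraints reduce to $x_C=x_D$, so the ``product of intervals'' claim is false. More fundamentally, a vertex chosen from constraints built out of $E_C,O_C$ would depend on $\Phi$. The lemma needs one $\Psi'$ serving every $\Phi$ at once. This is exactly how it is used in Theorem~\ref{thm:n-2}, where each $\Psi_r'$ must stay orthogonal to all the other squares simultaneously.

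\textbf{The missing idea.} Compare pairwise inner products rather than dephased identities. Orthogonality of $\Psi$ and $\Phi$ means $\braket{\psi_c}{\psi_d}\braket{\phi_c}{\phi_d}=0$ for all cells $c\neq d$. So it suffices to construct $\Psi'$ with $\psi_c\perp\psi_d\Rightarrow\psi'_c\perp\psi'_d$. This condition does not involve $\Phi$ at all, and it automatically makes $\Psi'$ a quantum Latin square.

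By your own Step 1, an orthogonal pair either has disjoint supports, or shares a support $\{a,b\}$ with $\psi_d\propto\psi_c^\perp$, possibly in different cycles. Hence cycles whose vectors span the same or orthogonal lines in $\mathrm{span}\{\ket a,\ket b\}$ must be oriented consistently.

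The paper achieves this as follows. Take one weight-two entry with support $\{a,b\}$, and apply a single unitary of $\mathrm{span}\{\ket a,\ket b\}$ that sends it to $\ket a$ to all entries with support $\{a,b\}$ simultaneously. This preserves every orthogonal pair and never increases weights, so one can iterate until no weight-two entry remains.
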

\begin{proof}
    Let \(n\) be the order of \(\Psi\). We may assume that \(n\geq4\), otherwise the statement is trivial.
    We show that, as long as \(\Psi\) has entries of weight two, we can replace \(\Psi\) by a quantum Latin square with strictly fewer entries of weight two and no entries of weight three or more, while still being orthogonal to \(\Phi\). We can repeat this operation until all entries have weight one.
    
    Choose an entry \(\psi_{ij}\) of weight two, and assume without loss of generality that it has pattern 110\dots0. Let \(U\) be the unique (up to phase factors) unitary transformation that converts \(\psi_{ij}\) into \(\ket{1}\) and fixes \(\ket{3},\dots,\ket{n}\).
    Let \(\Psi'\) be the quantum Latin square obtained from \(\Psi\) by applying \(U\) to every entry with pattern 110\dots0, while keeping the other entries the same.
    First, the operation never increases the weight of any entry, and decreases the weight of \(\psi_{ij}\). Second, the operation preserves the orthogonality with other quantum Latin squares because whenever two entries in \(\Psi\) are orthogonal, the corresponding entries in \(\Psi'\) are orthogonal as well. The only case where this could go wrong is when one of the two orthogonal entries has pattern 110\dots0, but then the other entry has either the same support \(\{1,2\}\) or a support disjoint from \(\{1,2\}\). 
\end{proof}

We are now ready to prove Theorem~\ref{thm:n-2}.

\begin{proof}[Proof of Theorem~\ref{thm:n-2}]
    Suppose, for a contradiction, that \(\{\Psi_1,\dots,\Psi_{n-2}\}\) is a set of \(n-2\) MOQLS of order \(n\) where at least one of them, say \(\Psi_1\), is not classical. By rule \#1, we may assume them to be in standard form. At an off-first-row cell \((i,j)\), rule \#2 excludes coordinate \(j\) from every support, while rule \#3 makes the supports from the \(n-2\) squares pairwise disjoint. Therefore, all entries have weight at most \((n-1)-(n-3)=2\). Since \(\Psi_1\) is assumed to be non-classical, it has an entry of weight two. The first row has only entries of weight one, so suppose without loss of generality that the entry \((\Psi_1)_{21}\) has pattern 0110\dots0. In the same row, the other \(n-1\) patterns cannot be all of the form *00*\dots*, because they are orthogonal and hence span a space of dimension \(n-1\). So there exists a column \(j\) for which \((\Psi_1)_{2j}\) has a pattern of the form 01*\dots* or 0*1*\dots*. Since \((\Psi_1)_{21}\) and \((\Psi_1)_{2j}\) are orthogonal and the weight of \((\Psi_1)_{2j}\) is at most two, the pattern of \((\Psi_1)_{2j}\) is equal to 0110\dots0 as well. Similarly, there is another entry \((\Psi_1)_{i1}\) with pattern 0110\dots0 in the first column.
    \[
    \Psi_1=
    \begin{tabular}{|c|c|c|c|}
    \hline
    1000\dots0 & & &\\
    \hline
    0110\dots0 & \dots & 0110\dots0 &\dots\\
    \hline
    \vdots & & &\\
    \hline
    0110\dots0 & & &\\
    \hline
    \vdots & & &\\
    \hline
    \end{tabular}
    \quad\mapsto\quad
    \Psi_1'=
    \begin{tabular}{|c|c|c|c|}
    \hline
    1000\dots0 & & &\\
    \hline
    0100\dots0 & \dots & 0010\dots0 &\dots\\
    \hline
    \vdots & & &\\
    \hline
    0010\dots0 & & &\\
    \hline
    \vdots & & &\\
    \hline
    \end{tabular}
    \]
    Now apply Lemma~\ref{lem:weight2} successively to all \(n-2\) squares to obtain a set \(\{\Psi_1',\Psi_2',\dots,\Psi_{n-2}'\}\) of \(n-2\) classical \MOQLS{n}. Note that every resulting support is contained in the corresponding original support. Choose the first unitary used in the proof of Lemma~\ref{lem:weight2} for \(\Psi_1\) so that \((\Psi_1')_{21}=\ket{2}\). The entries \((\Psi_1)_{2j}\) and \((\Psi_1)_{i1}\) have the same support of weight two and are orthogonal to \((\Psi_1)_{21}\), so they are mapped to \(\ket{3}\) up to a phase.

    For every \(r\geq2\), rule \#3 shows that the original entries \((\Psi_r)_{2j}\) and \((\Psi_r)_{i1}\) have support disjoint from \(\{2,3\}\). Support preservation in Lemma~\ref{lem:weight2} implies that the final classical entries at those cells in \(\Psi_r'\) are neither \(\ket{2}\) nor \(\ket{3}\).
    
    By Theorem~\ref{thm:bruckcompletion} (and the remark below it), there is a classical quantum Latin square \(\Phi\) of order \(n\) such that \(\{\Psi_1',\dots,\Psi_{n-2}',\Phi\}\) is a set of \(n-1\) classical \MOQLS{n} in standard form.
    At position \((2,j)\), the set of entries of \(\Psi_1',\dots,\Psi_{n-2}'\) are precisely the elements of \(\{\ket{1},\ket{3},\ket{4},\dots,\ket{n}\}\setminus\{\ket{j}\}\), up to phase factors, so \(\Phi_{2j}=\ket{2}\). Similarly, the set of entries of \(\Psi_1',\dots,\Psi_{n-2}'\) at position \((i,1)\) is \(\{\ket{3},\dots,\ket{n}\}\), up to phase factors, so \(\Phi_{i1}=\ket{2}\). Now both \(\left(\psi_1'\right)_{2j}\otimes\phi_{2j}\) and \(\left(\psi_1'\right)_{i1}\otimes\phi_{i1}\) are equal to \(\ket{3}\otimes\ket{2}\), contradicting the orthogonality between \(\Psi_1'\) and \(\Phi\).
\end{proof}

\section{Large non-classical sets of MOQLS}\label{sec:construction}

Instead of using the index set \(\{1,\dots,n\}\) for the elements of the fixed orthonormal basis and for the rows and columns of a quantum Latin square of order \(n\), we can also use any finite Frobenius ring \(R\) of order \(n\). More precisely, we can denote the standard basis of \(\CC^n\) by \(\{\ket{i}\colon\,i\in R\}\) and the entries of a quantum Latin square \(\Psi\in(\CC^n)^{n\times n}\) by \(\psi_{ij}\) with \(i,j\in R\).

Let \(R\) be a finite Frobenius ring of order \(n\) and let \(\chi:R\to\mathbb{T}\) be a generating character. Recall that \(\mathbb{T}=\{z\in\mathbb{C}:|z|=1\}\) is the complex unit circle. 

\begin{definition}\label{def:construction}
    Given a permutation \(f:R\to R\), we define the quantum Latin square \(\Psi^{(f)}=(\psi_{ij}^{(f)})\) as
    \[\psi_{ij}^{(f)}=\frac1{\sqrt{n}}\sum_{x\in R}\chi(if(x)+jx)\ket{x}.\]
\end{definition}

We first prove that \(\Psi^{(f)}\) is indeed a quantum Latin square:

\begin{lemma}\label{lem:construction}
If \(f:R\to R\) is a permutation, then \(\Psi^{(f)}\) is a quantum Latin square.
\end{lemma}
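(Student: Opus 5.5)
We need to check two things: each entry \(\psi^{(f)}_{ij}\) is a unit vector, and each row and each column of \(\Psi^{(f)}\) is an orthonormal set. Since every row and column has exactly \(n\) entries in \(\CC^n\), orthonormality then makes them bases. The whole argument reduces to computing inner products and applying Lemma~\ref{lem:frobenius}.

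\textbf{Norm.} Since \(|\chi(\cdot)|=1\), we get
\[\braket{\psi^{(f)}_{ij}}{\psi^{(f)}_{ij}}=\frac1n\sum_{x\in R}|\chi(if(x)+jx)|^2=1.\]

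\textbf{General inner product.} Because \(\chi\) is a homomorphism into \(\mathbb{T}\), we have \(\overline{\chi(a)}=\chi(-a)\) and \(\chi(a)\chi(b)=\chi(a+b)\). Using these, for cells \((i,j)\) and \((i',j')\),
\[\braket{\psi^{(f)}_{ij}}{\psi^{(f)}_{i'j'}}=\frac1n\sum_{x\in R}\chi\bigl((i'-i)f(x)+(j'-j)x\bigr).\]

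\textbf{Rows.} Take \(i=i'\) and \(j\neq j'\). The sum becomes \(\frac1n\sum_{x}\chi((j'-j)x)\). With \(a=j'-j\neq0\), this vanishes by Lemma~\ref{lem:frobenius}.

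\textbf{Columns.} Take \(j=j'\) and \(i\neq i'\). The sum becomes \(\frac1n\sum_x\chi((i'-i)f(x))\). Here we use that \(f\) is a permutation: substituting \(y=f(x)\) turns it into \(\frac1n\sum_{y\in R}\chi((i'-i)y)\), which again vanishes by Lemma~\ref{lem:frobenius}.

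There is no real obstacle. The only points needing care are the conjugation identity for characters and the reindexing by \(f\), which is exactly where the hypothesis that \(f\) is a permutation is used.
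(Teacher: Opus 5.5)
Your proof is correct and follows the paper's approach: compute the row and column inner products, apply Lemma~\ref{lem:frobenius}, and for the columns reindex by the permutation \(f\). The only difference is that you also check explicitly that each entry is a unit vector, which the paper leaves implicit.
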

\begin{proof}
For all $j\neq j'$ we have
\[\braket{\psi_{ij}^{(f)}}{\psi_{ij'}^{(f)}}=\frac1n\sum_{x\in R}\chi((j'-j)x)=0\]
by Lemma~\ref{lem:frobenius}.
Similarly, for $i\neq i'$ we have
\[
\braket{\psi_{ij}^{(f)}}{\psi_{i'j}^{(f)}}=\frac1n\sum_{x\in R}\chi((i'-i)f(x))=0
\]
by Lemma~\ref{lem:frobenius}, since \(f\) is a permutation of \(R\).
\end{proof}

\begin{lemma}\label{lem:classical}
    Let \(f:R\to R\) be a permutation. The quantum Latin square \(\Psi^{(f)}\) is classical if and only if \(f\) is the sum of an additive map and a constant.
\end{lemma}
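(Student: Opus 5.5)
Both directions use the inner product formulas behind Lemma~\ref{lem:construction}. One is an explicit isotopy; the other is an invariant of isotopy.

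\emph{If.} Suppose \(f(x)=g(x)+c\) with \(g\) additive. Since \(f\) is a permutation, \(g\) is a bijective additive map. Then
\[\chi(if(x)+jx)=\chi(ic)\,\chi(ig(x)+jx).\]
The factor \(\chi(ic)\) is a phase depending only on the cell, so it can be removed. I would then apply the unitary \(F\) defined by
\[F\ket{x}=\frac1{\sqrt n}\sum_{y\in R}\overline{\chi(yx)}\ket{y}.\]
This is the Fourier transform of \(R\); it is unitary by Lemma~\ref{lem:frobenius}. The image of \(\psi_{ij}\) is \(\sum_y\big(\tfrac1n\sum_x\chi(ig(x)+jx-yx)\big)\ket{y}\).

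The map \(x\mapsto\chi(ig(x))\) is an additive character, so since \(\chi\) is generating it equals \(x\mapsto\chi(h_i x)\) for a unique \(h_i\in R\). Lemma~\ref{lem:frobenius} then collapses the inner sum to the indicator of \(y=h_i+j\). Hence every entry becomes \(\ket{h_i+j}\) up to phase, so the square is classical.

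\emph{Only if.} Isotopy acts on the Gram data in a controlled way. Phases multiply \(\braket{\psi_{ij}}{\psi_{kl}}\) by unimodular factors, row and column permutations relabel the cells, and unitaries preserve inner products. So the set of moduli \(|\braket{\psi_{ij}}{\psi_{kl}}|\) is invariant, and in a classical square every such modulus lies in \(\{0,1\}\). I would compute, for \(i\ne i'\) and \(j\ne j'\),
\[\braket{\psi_{ij}}{\psi_{i'j'}}=\frac1n\sum_{x\in R}\chi\big((i'-i)f(x)+(j'-j)x\big),\]
and require that it has modulus \(0\) or \(1\) for every choice of \(a=i'-i\ne0\) and \(b=j'-j\).

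The modulus is \(1\) exactly when \(x\mapsto\chi(af(x)+bx)\) is constant, since it is an average of \(n\) unimodular numbers. Fix \(a=1\) and set \(\lambda(x)=\chi(f(x))\). The sum is then a Fourier coefficient \(\hat\lambda(b)\), up to normalisation. By Parseval, \(\sum_b|\hat\lambda(b)|^2=1\). If every coefficient has modulus in \(\{0,1\}\), exactly one \(b_0\) has modulus \(1\), so \(\chi(f(x)-b_0x)=\chi(f(0))\) for all \(x\).

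Running the same argument with every \(a\ne0\) gives \(b_a\) with
\[\chi\big(a(f(x)-f(0))\big)=\chi(b_ax)\quad\text{for all }x.\]
Set \(g(x)=f(x)-f(0)\). Then for each \(x,y\), the element \(d=g(x+y)-g(x)-g(y)\) satisfies \(\chi(ad)=1\) for all \(a\in R\). By uniqueness in the generating character property, this forces \(d=0\). So \(g\) is additive and \(f=g+f(0)\).

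The main obstacle is making the "only if" invariant rigorous. Classical means some isotopic copy has all entries in the standard basis, and I must check that the moduli of inner products between entries in distinct rows and columns are genuinely preserved. This holds because a single unitary is applied to the whole square; under simultaneous row and column permutations the cell pairs with \(i\ne i'\) and \(j\ne j'\) are only relabelled. I would state this carefully before the Fourier computation.
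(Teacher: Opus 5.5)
Your proof is correct, and its algebraic core is the same as the paper's. Classicality forces \(x\mapsto\chi(a(f(x)-f(0)))\) to be a character \(x\mapsto\chi(b_ax)\) for every \(a\), and the generating character then forces \(f-f(0)\) to be additive.

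The difference is in how you connect ``classical'' to this identity. The paper normalises \(f(0)=0\) and uses the criterion that a quantum Latin square is classical exactly when it has \(n\) distinct entries up to phase. For ``only if'', this says directly that each first-column entry \(\psi_{i0}\) is some first-row entry \(\psi_{0j}\) up to a phase \(c\), and \(x=0\) gives \(c=1\). For ``if'', it suffices to note that \(\psi_{ij}\) equals the first-row entry in column \(h_i+j\), where \(\chi(if(x))=\chi(h_ix)\).

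Your Gram-moduli invariant plus Parseval reaches the same point less directly. For fixed \(a\), your coefficients are exactly the coordinates of \(\psi_{a0}\) in the orthonormal basis formed by the first row. So your Parseval identity is just Lemma~\ref{lem:construction}, and moduli in \(\{0,1\}\) say that \(\psi_{a0}\) is a first-row vector up to phase. In exchange, you verify the isotopy invariant explicitly, whereas the paper takes the cardinality criterion as evident. You also give an explicit isotopy, the Fourier transform, for the ``if'' direction.

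Two small remarks. First, with \(b=j'-j\) the constant function is \(\chi(f(x)+b_0x)\), not \(\chi(f(x)-b_0x)\); this is a harmless relabelling. Second, your last step, like the paper's, deduces \(d=0\) from \(\chi(ad)=1\) for all \(a\). That is nondegeneracy on the right, while the generating-character definition is phrased on the left. It is still fine: by Lemma~\ref{lem:frobenius} the matrix \((\chi(ax)/\sqrt{n})_{a,x}\) has orthonormal rows, hence is unitary and has orthogonal columns. This gives \(\sum_{a\in R}\chi(ad)=0\) for \(d\neq0\).
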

\begin{proof}
    Replacing \(f\) by \(f-f(0)\) changes every entry \(\psi_{ij}^{(f)}\) only by the phase factor \(\chi(if(0))\), so assume that \(f(0)=0\).
    Since every row of a quantum Latin square of order \(n\) already contains \(n\) distinct entries, \(\Psi^{(f)}\) is classical if and only if its total cardinality (the number of distinct entries up to phase) is \(n\).

    First assume that \(\Psi^{(f)}\) is classical. The first row and column must have the same entries up to phase, so for every \(i\in R\), there exists a phase factor \(c\in\mathbb{T}\) and a column index \(j\in R\) such that
    \[\frac1{\sqrt{n}}\sum_{x\in R}\chi(if(x))\ket{x}=\frac{c}{\sqrt{n}}\sum_{x\in R}\chi(jx)\ket{x}.\]
    Comparing coefficients gives that, for every \(i\in R\), there are \(c\in\mathbb{T}\) and \(j\in R\) such that
    \[\chi(if(x))=c\chi(jx)\]
    for all \(x\in R\).
    Filling in \(x=0\), we get that \(c=1\) because \(f(0)=0\) and \(\chi(0)=1\). 
    Since \(x\mapsto \chi(jx)\) is an additive character, the map \(x\mapsto\chi(if(x))\) is additive character. That is, \(\chi(if(x+y))=\chi(if(x))\cdot\chi(if(y))\) for all \(i,x,y\in R\). By additivity of \(\chi\), we get \(\chi(i(f(x+y)-f(x)-f(y)))=1\) for all \(i,x,y\in R\). But \(\chi\) is a generating character, so \(f(x+y)=f(x)+f(y)\) for all \(x,y\in R\). In other words, \(f\) is additive.
    
    Conversely, if \(f\) is additive, then for every \(i\in R\), the map \(x\mapsto\chi(if(x))\) is an additive character, so by the defining property of a generating character, there exists a unique element \(a\in R\) such that \(\chi(if(x))=\chi(ax)\) for \(x\in R\). Thus, \(\psi_{ij}^{(f)}=\frac1{\sqrt{n}}\sum_{x\in R}\chi((a+j)x)\ket{x}\) is equal to the first-row entry on position \((0,a+j)\). We conclude that \(\Psi^{(f)}\) has cardinality \(n\) and is therefore classical.
\end{proof}

If \(f(x)=mx\) with \(m\in R^*\), then \(\Psi^{(f)}\) is isotopic to the classical quantum Latin square \(\Phi^{(m)}=(\phi_{ij}^{(m)})\) with \[\phi_{ij}^{(m)}=\ket{mi+j}\]
via the Fourier transform \(F:\CC^n\to\CC^n:\ket{x}\mapsto\frac1{\sqrt{n}}\sum_{y\in R}\chi(xy)\ket{y}\).

\begin{lemma}\label{lem:orthogonal}
Let \(m\in R^*\). If both \(f(x)\) and \(x\mapsto f(x)-mx\) are permutations of \(R\), then \(\Psi^{(f)}\) and \(\Phi^{(m)}\) are orthogonal quantum Latin squares.
\end{lemma}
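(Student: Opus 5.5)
Both objects are already known to be quantum Latin squares. \(\Psi^{(f)}\) is one by Lemma~\ref{lem:construction}, since \(f\) is a permutation. \(\Phi^{(m)}\) is a classical Latin square with entries \(\ket{mi+j}\): for fixed \(i\), \(j\mapsto mi+j\) is a bijection, and for fixed \(j\), \(i\mapsto mi+j\) is a bijection because \(m\in R^*\). So only orthogonality needs proof. By definition, this means showing that the \(n^2\) vectors \(\psi^{(f)}_{ij}\otimes\phi^{(m)}_{ij}\) form an orthonormal basis of \(\CC^n\otimes\CC^n\). Since there are \(n^2\) unit vectors in a space of dimension \(n^2\), it suffices to show that they are pairwise orthogonal.

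Take two distinct cells \((i,j)\neq(i',j')\). The inner product factorises as
\[\braket{\psi^{(f)}_{ij}}{\psi^{(f)}_{i'j'}}\cdot\braket{mi+j}{mi'+j'}.\]
If \(mi+j\neq mi'+j'\), the second factor is \(0\) and we are done. The remaining case is \(mi+j=mi'+j'\), so \(j'-j=-m(i'-i)\). Here \(i'\neq i\), since otherwise \(j=j'\) as well.

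In that case, compute the first factor directly from Definition~\ref{def:construction}:
\[\braket{\psi^{(f)}_{ij}}{\psi^{(f)}_{i'j'}}=\frac1n\sum_{x\in R}\chi\big((i'-i)f(x)+(j'-j)x\big)=\frac1n\sum_{x\in R}\chi\big((i'-i)(f(x)-mx)\big).\]
This step uses that \(\overline{\chi(a)}=\chi(-a)\) and that \(\chi\) is additive. Set \(a=i'-i\neq0\). Because \(x\mapsto f(x)-mx\) is a permutation of \(R\), we may substitute \(y=f(x)-mx\), which gives \(\frac1n\sum_{y\in R}\chi(ay)=0\) by Lemma~\ref{lem:frobenius}.

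The only step needing care is the bookkeeping in the exponent. One has to write \(j'-j=-m(i'-i)\) and factor the argument as \((i'-i)f(x)-(i'-i)mx=(i'-i)(f(x)-mx)\). This factorisation relies on multiplying \(m\) on the correct side, which matters because \(R\) need not be commutative. I would therefore keep the term as \((i'-i)\,m\,x\), which is left multiplication of \(x\) by \((i'-i)m\), and derive the collision condition as \((i'-i)m=-(j'-j)\). That forces writing \(\phi^{(m)}_{ij}=\ket{im+j}\), or equivalently assuming \(R\) is commutative (as for \(\ZZ/n\ZZ\) and \(\FF_q\)). If the paper intends \(mi\) literally in a noncommutative ring, the hypothesis should read \(x\mapsto f(x)-mx\) with the matching convention, and I would adjust the order of multiplication so the two agree.
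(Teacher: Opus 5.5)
Your proof is correct and follows the paper's argument. The second tensor factor kills the inner product unless \(mi+j=mi'+j'\); in that case the exponent is rewritten as \((i'-i)(f(x)-mx)\), and Lemma~\ref{lem:frobenius} applies after substituting \(y=f(x)-mx\).

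Your caveat about the order of multiplication is justified: the paper's rewriting silently uses \(m(i'-i)=(i'-i)m\). Writing \(\phi^{(m)}_{ij}=\ket{im+j}\) makes the argument work verbatim for noncommutative \(R\). This is also the convention the paper's Fourier-transform remark naturally produces, since for \(f(x)=mx\) one gets \(\psi^{(f)}_{ij}=F\ket{im+j}\).
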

\begin{proof}
    By Lemma~\ref{lem:construction}, \(\Psi^{(f)}\) is a quantum Latin square, and \(\Phi^{(m)}\) is a quantum Latin square because \(m\) is invertible, so it only remains to prove that \(\psi_{ij}^{(f)}\otimes\phi_{ij}^{(m)}\) is orthogonal to \(\psi_{k\ell}^{(f)}\otimes\phi_{k\ell}^{(m)}\) for all \((i,j)\neq(k,\ell)\). That is,
    \[\frac1{\sqrt{n}}\sum_{x\in R}\chi(if(x)+jx)\ket{x}\otimes\ket{mi+j}\]
    is orthogonal to
    \[\frac1{\sqrt{n}}\sum_{y\in R}\chi(kf(y)+\ell y)\ket{y}\otimes\ket{mk+\ell}\]
    for all \((i,j)\neq(k,\ell)\). The inner product is zero immediately if \(mi+j\neq km+\ell\). When \(mi+j=km+\ell\), the inner product can be nonzero only when \(x=y\). Thus, the inner product is
    \[\frac1n\sum_{x\in R}\chi((j-\ell)x+(i-k)f(x))=\frac1n\sum_{x\in R}\chi(((i-k)(f(x)-mx))).\]
    Since \(x\mapsto f(x)-mx\) is a permutation of \(R\), this sum is zero by Lemma~\ref{lem:frobenius}, unless \(i=k\). But if \(i=k\), then \(j=\ell\), which is not the case.
\end{proof}

\begin{proof}[Proof of Theorem~\ref{thm:construction}]
    Let \(m\in M\). Replacing \(f\) by \(x\mapsto f(x)-mx\) and \(M\) by \(M-m\) if needed, we may assume that \(f\) is a permutation and \(0\in M\). By Lemma~\ref{lem:classical}, \(\Psi^{(f)}\) is non-classical. For every \(m\in M\setminus\{0\}\), we have that \(m\in R^*\) and that \(x\mapsto f(x)-mx\) is a permutation, so we can include the classical square \(\Phi^{(m)}\). Lemma~\ref{lem:orthogonal} shows that each of them is orthogonal to \(\Psi^{(f)}\). If \(m\neq m'\), then the inner product of \(\ket{mi+j}\otimes\ket{m'i+j}\) and \(\ket{mk+\ell} \otimes \ket{m'k+\ell}\) is zero unless $mi+j=mk+\ell$ and $m'i+j=m'k+\ell$, which implies $(m-m')(i-k)=0$, so \(i=k\) and then $j=\ell$. Hence the classical squares are mutually orthogonal. The resulting set contains \(|M|\) quantum Latin squares, exactly one of which is non-classical.
\end{proof}

\subsection{Functions over a finite field that determine few directions}

In this subsection, we consider the case when \(R=\FF_q\) is a finite field, so \(n=q=p^h\) where \(p\) is prime. See \cite{Frobenius} for a proof that a finite field is indeed a Frobenius ring.

In view of Theorem~\ref{thm:construction}, we want to find permutations \(f:\FF_q\to\FF_q\) that are not additive (\(\FF_p\)-linear) up to a constant, and that determine few directions, where we say that \[\left\{\frac{f(x)-f(y)}{x-y}\colon\,x,y\in\FF_q, x\neq y\right\}\] is the set of directions determined by \(f\). Indeed, observe that, for \(x\neq y\), 
\[f(x)-mx=f(y)-my\quad\Longleftrightarrow\quad m=\frac{f(x)-f(y)}{x-y},\]
so \(f\) does not determine the direction \(m\in\FF_q\) if and only if \(x\mapsto f(x)-mx\) is a permutation of \(\FF_q\).

\begin{example}[in \cite{BBS} this example is attributed to Megyesi]\label{ex:f2}
    Suppose that \(H\) is a proper multiplicative subgroup of \((\FF_q\setminus\{0\},\cdot)\) with \(|H|\geq1\). Define \[f:\FF_q\to\FF_q:x\mapsto\begin{cases}x&\text{if }x\in H,\\0&\text{if }x\notin H.\end{cases}\]
    Thus,
    \[\frac{f(x)-f(y)}{x-y} \in \{0,1\}\]
    if both \(x\) and \(y\) are elements of \(H\) or both are not elements of \(H\). In the case \(x\in H\) and \(y\notin H\), we have
    \[\frac{f(x)-f(y)}{x-y}=\frac{x}{x-y}=\frac{1}{1-z}\]
    where \(z=y/x\) (note that \(x\neq0\)) is not contained in \(H\). If \(x\notin H\) and \(y\in H\), we have
    \[\frac{f(x)-f(y)}{x-y}=\frac{-y}{x-y}=\frac{1}{1-z}\]
    where \(z=x/y\) (note that \(y\neq0\)) is not contained in \(H\). Therefore, the function $f$ determines exactly \(2+(q-1-|H|)=q+1-|H|\) directions. Thus, there are \(|H|-1\) undetermined directions.
\end{example}

To maximize the number of undetermined directions, we can choose \(|H|\) to be the largest proper divisor of \(q-1\), assuming \(q-1\) is not prime, because \((\FF_q\setminus\{0\},\cdot)\) is a cyclic group. In particular, for odd \(q\), there are $(q-3)/2$ undetermined directions.

Rédei \cite{Redei} proved that when \(q\) is an odd prime, any non-affine function determines at least $(q+3)/2$ directions, so this is the best we can do. Moreover, Lov\'asz and Schrijver \cite{LS1981} proved that, if \(q\) is an odd prime, any optimal non-affine function is affinely equivalent to the function \(f:\FF_q\to\FF_q:x\mapsto x^{(q+1)/2}\).

\begin{proof}[Proof of Corollary~\ref{cor:construction}]
    We use Theorem~\ref{thm:construction} together with Example~\ref{ex:f2}, in the optimal case when \(|H|\) is the largest proper divisor of \(q-1\).
    When applying Theorem~\ref{thm:construction}, we let \(M\) be the set of undetermined directions. Note that the second condition on \(M\) is trivially fulfilled.

    To complete the proof, we verify that the function \(f\) from Example~\ref{ex:f2} is non-additive (that is, not \(\FF_p\)-linear). 
    Since \(H\) is a proper subgroup of \((\FF_q\setminus\{0\},\cdot)\), there exists \(x\in\left(\FF_q\setminus\{0\}\right)\setminus H\). Additivity would imply \(f(x+1)=f(1)=1\), but also \(f(x+1)\in\{0,x+1\}\) by definition of \(f\), a contradiction.
\end{proof}

\section{Conclusion and future directions}

We constructed new lower and upper bounds on the largest number \(t\) for which there exist non-classical sets of \(t\) \MOQLS{n}. Specific values of these new bounds for small \(n\) can be found in Table~\ref{tab:newbounds}.

We pose two open problems for future research:

\begin{problem}
    Is a set of \(n-3\) \MOQLS{n}, \(n\neq4\), necessarily classical?
\end{problem}

\begin{problem}
    Are there numbers \(t\) and \(n\) for which there exist \(t\) \MOQLS{n} but no \(t\) \MOLS{n}?
\end{problem}

The smallest open case for the second problem is \(n=10\). It is not known that \(7\) \MOLS{10} do not exist (combining Theorem~\ref{thm:bruckcompletion} with the non-existence of a projective plane of order 10, see \cite{BruckRyser}), but it is still an open problem whether \(7\) \MOQLS{10} exist. It would already be interesting to find \(3\) \MOQLS{10}, because the existence of \(3\) \MOLS{10} is not known.

\paragraph{Acknowledgements.}
The authors acknowledge the support of the Spanish Ministry of Science, Innovation and Universities grant PID2023-147202NB-I00.
Robin Simoens is supported by the Research Foundation Flanders (FWO) through the grant 11PG724N.
We thank Eline Herdies and Leo Storme for helpful discussions towards proving Theorem~\ref{thm:n-2}. We used GPT-5.5 Pro by OpenAI to proofread the manuscript.

\bibliographystyle{abbrv}
\bibliography{ref}

\vfill
\noindent\textsc{Simeon Ball}\\
\textsc{\small Department of Mathematics}\\[-1mm]
\textsc{\small Universitat Politècnica de Catalunya}\\[-1mm]
\textsc{\small C. Pau Gargallo 14, 08028 Barcelona, Spain}\\
{\it E-mail address:} {\href{mailto:simeon.michael.ball@upc.edu}{\url{simeon.michael.ball@upc.edu}}}\\

\noindent\textsc{Robin Simoens}\\
\textsc{\small Department of Mathematics: Analysis, Logic and Discrete Mathematics}\\[-1mm]
\textsc{\small Ghent University}\\[-1mm]
\textsc{\small Krijgslaan 297, 9000 Gent, Belgium}\\
\textsc{\small Department of Mathematics}\\[-1mm]
\textsc{\small Universitat Politècnica de Catalunya}\\[-1mm]
\textsc{\small C. Pau Gargallo 14, 08028 Barcelona, Spain}\\
{\it E-mail address:} {\href{mailto:Robin.Simoens@UGent.be}{\url{Robin.Simoens@UGent.be}}}\\

\end{document}